\numberwithin{equation}{section}
\def\R{\mathbb{R}}
\def\N{\mathbb{N}}
\def\NN{{\mathcal N}}
\def\cF{{\mathcal F}}
\def\vep{\varepsilon}
\def\bt{{\bf t}}
\newtheorem{theorem}{Theorem}
\newtheorem{lemma}[theorem]{Lemma}
\begin{document}

\subjclass{Primary 60H15. 
Secondary 60G60, 60H10, 60H40, 60K35, 60J80. }

\title[Non-uniqueness for SPDE]
{Non-uniqueness for non-negative solutions of parabolic stochastic partial differential equations}

\author{K. Burdzy}
\address{Department of  Mathematics, U. of Washington, Seattle, USA}
\email{burdzy@math.washington.edu}

\author{C. Mueller}
\address{Department of Mathematics, U. of Rochester, Rochester, USA}
\email{http://www.math.rochester.edu/people/faculty/cmlr/}

\author {E.A. Perkins}
\address{Department of Mathematics, UBC, Vancouver, Canada}
\email{perkins@math.ubc.ca}

\thanks{Burdzy's research was supported in part by NSF Grant DMS-0906743 and
by grant N N201 397137, MNiSW, Poland.}
\thanks{Mueller's work was supported in part by  NSF Grant DMS-0705260.}
\thanks{Perkins' research was supported in part by an NSERC Discovery Grant.}
\date{\today}

\dedicatory { Dedicated to Don Burkholder.}

\begin{abstract}
Pathwise non-uniqueness is established for non-negative solutions of 
the parabolic stochastic pde 
$$\frac{\partial X}{\partial t}=\frac{\Delta}{2}X+X^p\dot W+\psi,\ X_0\equiv 0$$
where $\dot W$ is a white noise, $\psi\ge 0$ is smooth, compactly supported
and non-trivial, and $0<p<1/2$.  We further show that any solution spends
positive time at the $0$ function.

\end{abstract}

\maketitle
\section{Introduction}

Let $\sigma:\R\to\R$ be $p$-H\"older continuous (so 
$|\sigma(x)-\sigma(y)|\leq K|x-y|^p$), let $\psi\in C_c^1(\R)$ (the 
space of $C^1$ functions on $\R$ with compact support), and consider the 
parabolic stochastic partial differential equation
\begin{equation}\label{spde1}
\frac{\partial X}{\partial t}(t,x)=\frac{\Delta}{2}X(t,x)+\sigma(X(t,x))\dot W(t,x)+\psi.
\end{equation}
Here $\dot W$ is a space-time white noise on $\R_+\times \R$. If $\sigma$ is 
Lipschitz continuous, pathwise uniqueness of solutions to \eqref{spde1} is 
classical (see, e.g., \cite{Wal86}).  Particular cases of \eqref{spde1} for 
non-Lipschitz $\sigma$ arise in equations modeling populations undergoing 
migration (leading to the Laplacian) and critical reproduction or resampling 
(leading to the white noise term).  For example if $\sigma(X)=\sqrt X$ and 
$X\ge 0$, we have the equation for the density of one-dimensional 
super-Brownian motion with immigration $\psi$ (see Section III.4 of 
\cite{P01}). If $\sigma(X)=\sqrt{X(1-X)}$, $\psi=0$ and $X\in[0,1]$ we get the 
equation for the density of the stepping stone model on the line \cite{shi88}.
In both cases pathwise uniqueness of solutions remains 
open while uniqueness in law is obtained by (different) duality arguments (see 
the above references).  The duality arguments are highly non-robust and fail, 
for example if $\sigma(x,X)=\sqrt{f(x,X)X}$, which models a critically 
branching population with branching rate at site $x$ in state $X$ is $f(x,X)$.  
This is one  reason that there is interest in proving pathwise uniqueness in 
\eqref{spde1} under H\"older continuous conditions on $\sigma$, corresponding 
to the classical results of \cite{YW71} for one-dimensional SDE's with 
H\"older $1/2$-continuous diffusion coefficients. 

In \cite{MP10} pathwise uniqueness for \eqref{spde1} is proved if $p>3/4$ and 
in \cite{MMP11} pathwise uniqueness and uniqueness in law are shown to fail in 
\eqref{spde1} when $\sigma(X)=|X|^p$ for 
$1/2\le p<3/4$.  Here a non-zero solution to \eqref{spde1} is 
constructed for zero initial conditions and the signed nature of the solution 
is critical.  In the examples cited above the solutions of interest are 
non-negative and so it is natural to ask 
whether the results in \cite{MP10} 
can be improved if there is only one point (say $u=0$) where $\sigma(u)$ fails 
to be Lipschitz, and we are only interested in non-negative solutions. Finding 
weaker conditions which imply pathwise uniqueness of non-negative solutions in 
this setting is a topic of ongoing research. In this paper we give 
counterexamples to pathwise uniqueness of non-negative solutions in the 
admittedly easier setting where $p<1/2$.  Even here, however, we will find 
there are new issues which arise in our infinite dimensional setting. 
Our methods will also
allow us to extend the nonuniqueness result in \cite{MMP11} mentioned above to $0<p<1/2$.

We assume $\dot W$ is a white noise on the filtered probability space 
$(\Omega,\cF,\cF_t,P)$, where $\cF_t$ satisfies the usual hypotheses.  This 
means $W_t(\phi)$ is an $\cF_t$-Brownian motion with variance 
$\Vert\phi\Vert_2^2$ for each $\phi\in L^2(\R,dx)$ and $W_t(\phi_1)$ and 
$W_t(\phi_2)$ are independent if 
$\langle\phi_1,\phi_2\rangle\equiv\int\phi_1(x)\phi_2(x)dx=0$. A stochastic 
process $X:\Omega\times\R_+\times\R\to\R$ which is 
$\cF_t-\hbox{previsible}\times\hbox{Borel}$ measurable will be called a solution to the 
stochastic heat equation \eqref{spde1} with initial condition $X_0:\R\to\R$ if 
for each $\phi\in C_c^\infty(\R)$,
\begin{align*}
\langle X_t,\phi\rangle=&\langle X_0,\phi\rangle+\int_0^t\left\langle X_s,\frac{\Delta}{2}\phi\right\rangle ds\\
 &+\int_0^t\int \sigma(X(s,x))\phi(x)W(ds,dx)+t\langle\phi,\psi\rangle\hbox{ for all }t\ge 0 \ a.s.
\end{align*}
(The existence of all the integrals is of course part of the definition.) It 
is convenient to use the space $C_{rap}(\R)$ of rapidly decreasing continuous 
functions on $\R$ as a state space for our solutions.  To describe this 
space, for
$f\in C(\R)$ (the continuous functions on $\R$) let 
 \[|f|_\lambda=\sup_{x\in\R}e^{\lambda |x|}|f(x)|,\]
 and set
\begin{align*}
&C_{rap}=\{f\in C(\R):|f|_\lambda<\infty\ \forall \lambda>0\} 
,
  \\
&C_{tem}=\{f\in C(\R):|f|_\lambda<\infty\ \forall \lambda<0\}.
\end{align*}
Equip $C_{rap}$ with the complete metric
\[d(f,g)=\sum_{k=1}^\infty2^{-k}(\Vert f-g\Vert_k\wedge 1),\]
and $C_{tem}$ is given the complete metric
\[d_{tem}(f,g)=\sum_{k=1}^\infty 2^{-k}(\Vert f-g\Vert_{-1/k}\wedge 1).\]
Let $C^+_{rap}$ be the subspace of non-negative functions in $C_{rap}$, which 
is a Polish space. Our primary interest is in the smaller space 
$C^+_{rap}$ resulting in stronger non-uniqueness results.

A $C^+_{rap}$-valued solution to \eqref{spde1} is a solution $X$ such that $t\to X(t,\cdot)$ is in 
$C(\R_+,C^+_{rap})$, the space of continuous $C^+_{rap}$-valued paths for all $\omega$.  
In general if $E$ is a Polish space we give $C(\R_+,E)$ the topology of uniform convergence on compact sets.

The following result is proved just as in Theorem 2.5 of \cite{Shi94}.  

\begin{theorem}\label{existence} (Weak Existence of Solutions).  Assume $\psi\ge 0$ and the $p$-H\"older continuous function $\sigma$ satisfies $\sigma(0)=0$.  If $X_0\in C^+_{rap}$, there exists a filtered space $(\Omega,\cF,\cF_t,P)$ with a white noise $\dot W$ and a $C^+_{rap}$-valued solution of \eqref{spde1}.
\end{theorem}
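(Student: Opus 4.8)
The plan is to realize the solution as a weak limit of solutions of equations with Lipschitz coefficients, following the scheme of Theorem~2.5 of \cite{Shi94}. Since only non-negative solutions are sought and, for $X\ge 0$, $\sigma(X)=\sigma(X^+)$, I may as well replace $\sigma(u)$ by $\sigma(u^+)$, which is still $p$-H\"older with value $0$ at $0$; call it again $\sigma$. I would then pick smooth (in particular Lipschitz) functions $\sigma_n:\R\to\R$ with $\sigma_n(0)=0$, $\sigma_n\to\sigma$ uniformly on compact sets, and a uniform bound $|\sigma_n(u)|\le K|u|^p\le K(1+|u|)$ for all $n$ (obtained, e.g., by mollifying $\sigma$ and subtracting the value at the origin). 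For each $n$, classical theory (\cite{Wal86}, and \cite{Shi94} for the $C_{rap}$-valued formulation) provides a pathwise unique solution $X^n\in C(\R_+,C_{rap})$ of \eqref{spde1} with $\sigma$ replaced by $\sigma_n$, initial condition $X_0\in C^+_{rap}$ and immigration $\psi\ge 0$, on a fixed filtered space carrying a white noise $\dot W$.

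Next I would record the \emph{a priori} bounds, uniformly in $n$. With $S_t$ the heat semigroup and $p_t(x,y)$ its kernel, the mild form reads
\[
X^n_t(x)=S_tX_0(x)+\int_0^t S_{t-s}\psi(x)\,ds+\int_0^t\!\!\int p_{t-s}(x,y)\,\sigma_n(X^n_s(y))\,W(ds,dy).
\]
Using the Burkholder--Davis--Gundy inequality, the linear growth of $\sigma_n$, standard Gaussian kernel estimates, and a Gronwall argument carried out against the weights $e^{\lambda|x|}$, one obtains for every $\lambda>0$, $T>0$ and $q\ge 2$ a bound $\sup_n E\big[\sup_{t\le T}|X^n_t|_\lambda^{q}\big]<\infty$, together with space--time moment estimates $E[\,|X^n_t(x)-X^n_{t'}(x')|^q\,]\le C_{\lambda,T,q}\,e^{-\lambda(|x|+|x'|)}(|t-t'|^{q\gamma/2}+|x-x'|^{q\gamma})$ for $\gamma<1/2$, uniformly in $n$. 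By the Kolmogorov continuity criterion these yield tightness of the laws of $\{X^n\}$ in $C(\R_+,C_{rap})$. I expect this step --- getting the weighted moment and regularity estimates \emph{uniformly in the Lipschitz approximation}, controlling $\sigma_n$ only through its growth --- to be the main obstacle; everything else is soft.

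I would then apply Skorokhod's representation theorem to extract a subsequence and obtain, on a new filtered probability space, processes $\tilde X^n$ with $\mathrm{Law}(\tilde X^n)=\mathrm{Law}(X^n)$ converging almost surely in $C(\R_+,C_{rap})$ to a limit $X$. The uniform moment bounds pass to the limit by Fatou, so the paths of $X$ stay in $C_{rap}$. To identify $X$ as a solution, one tests the weak formulation against $\phi\in C_c^\infty(\R)$ and lets $n\to\infty$: the semigroup and immigration terms converge by the almost sure convergence of paths, while the martingale terms $M^n_t(\phi)=\int_0^t\!\int\sigma_n(\tilde X^n_s(y))\phi(y)\,W^n(ds,dy)$ converge in $L^2$ because $\sigma_n(\tilde X^n_s(y))\to\sigma(X_s(y))$ (uniform convergence of $\sigma_n$ on compacts plus a.s.\ local uniform convergence of $\tilde X^n$) and the uniform moment bounds give the requisite uniform integrability. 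A standard martingale-problem argument then shows that $\langle X_t,\phi\rangle-\langle X_0,\phi\rangle-\int_0^t\langle X_s,\tfrac{\Delta}{2}\phi\rangle ds-t\langle\phi,\psi\rangle$ is a continuous martingale with quadratic variation $\int_0^t\!\int\sigma(X_s(y))^2\phi(y)^2\,dy\,ds$, and hence, after possibly enlarging the space, is of the form $\int_0^t\!\int\sigma(X_s(y))\phi(y)\,W(ds,dy)$ for a white noise $\dot W$.

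Finally, non-negativity. Since $\sigma_n$ is Lipschitz with $\sigma_n(0)=0$, the function $Y\equiv0$ solves the $n$-th equation with zero initial data and zero immigration; as $X_0\ge0$ and $\psi\ge0$, the comparison theorem for one-dimensional SPDEs (see \cite{Shi94}) gives $X^n_t(x)\ge Y_t(x)=0$ for all $t,x$, a.s. This is inherited by $\tilde X^n$ and passes to the limit, so $X\in C(\R_+,C^+_{rap})$; and because $X\ge0$ we have $\sigma(X)=\sigma(X^+)$, so $X$ is in fact a $C^+_{rap}$-valued solution of the original equation \eqref{spde1}. $\qquad\blacksquare$
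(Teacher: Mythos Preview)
Your proposal is correct and is precisely the argument underlying Theorem~2.5 of \cite{Shi94}, which the paper simply cites: you have spelled out the Lipschitz approximation, uniform weighted moment bounds, tightness, Skorokhod passage to the limit, and the comparison argument for non-negativity that Shiga's proof uses, while noting (as the paper does) that the bounded non-negative immigration $\psi$ requires only a trivial modification of Shiga's hypothesis $\psi(x,X)\le c|X|$.
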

\begin{proof} Our conditions on $\sigma$ imply the hypothesis on $a$ in 
Theorem~2.5 of \cite{Shi94}, however that reference assumes $\psi(x,X)$ 
satisfies $\psi(x,X)\le c|X|$.  The proof, however, extends easily to our 
simpler setting of $\psi(x)\ge 0$. \end{proof}

Here is our main result on non-uniqueness. The proof is given in Section~\ref{sec:proofnonpu}.  Recall that $\psi\in C^1_c(\R)$.

\begin{theorem}\label{thm:nonpu} Consider \eqref{spde1} with $\sigma(X)=|X|^p$ for $p\in(0,1/2)$ and $\psi\ge 0$ with $\int\psi(x)\,dx>0$.  There is a filtered space $(\Omega,\cF,\cF_t,P)$ carrying a white noise $\dot W$ and two $C^+_{rap}$-valued solutions to \eqref{spde1} with initial conditions $X_0^1=X^2_0=0$ such that $P(X^1\neq X^2)>0$.  That is, pathwise uniqueness fails for non-negative solutions to \eqref{spde1} for $\sigma$, $\psi$ as above. 
\end{theorem}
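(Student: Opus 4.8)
The plan is to construct the two solutions on a common probability space as limits of a clever approximation scheme, exploiting the fact that for $p<1/2$ the noise coefficient $|X|^p$ is "too weak" near $X=0$ to pin the solution there. The key idea is that the zero function is a solution of the noise-free part, but because $\psi$ pushes mass in and the diffusion spreads it, once the solution leaves $0$ it picks up genuine randomness; conversely one can also keep the solution at $0$ for a while, so the branch point at the zero function can be resolved in (at least) two genuinely different ways.

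More concretely, here is the scheme I would carry out. First, I would study the one-point motion heuristic: if $X$ were spatially constant and equal to $x(t)$, we would have $dx = x^p\,dB + c\,dt$ with $c=\int\psi>0$, and for $p<1/2$ this SDE started at $0$ has the solution $x\equiv 0$ as well as solutions that leave $0$ immediately — this is the Yamada–Watanabe borderline failing. The plan is to lift this to the SPDE. I would fix a small stopping time / localization and a suitable mollification $\sigma_\delta$ or a comparison with a dominating process; then build, on one space, (i) a solution $X^2$ that genuinely moves (obtained from weak existence, Theorem~\ref{existence}, together with a Girsanov/absolute-continuity or martingale-problem argument showing it is not identically the deterministic PDE solution), and (ii) a second solution $X^1$ that agrees with the \emph{deterministic} heat equation with forcing $\psi$ — call it $u(t,x)$, the solution of $\partial_t u=\frac{\Delta}{2}u+\psi$, $u_0\equiv0$ — wait; that $u$ is strictly positive for $t>0$ so it is a perfectly good $C^+_{rap}$-valued solution of \eqref{spde1} only if the stochastic integral $\int_0^t\int |u(s,x)|^p\phi(x)\,W(ds,dx)$ vanishes, which it does not. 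So instead the correct pairing is: show there exist two distinct \emph{stochastic} solutions. The standard route (as in \cite{MMP11}) is to produce a stopping time $\tau>0$ with positive probability on which two solutions built by different approximation parameters disagree, using a Girsanov change of measure to compare a solution of \eqref{spde1} with a solution of the equation with an added drift, or with the solution of the equation driven by a rescaled noise, and then to show these are not a.s. equal by a second-moment / local-time computation at the zero function.

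The technical heart — and the main obstacle — is the passage from the finite-dimensional non-uniqueness intuition to the infinite-dimensional setting, precisely the point the authors flag (''there are new issues which arise in our infinite dimensional setting''). The difficulty is that ''spending positive time at $0$'' for a function-valued process is far more delicate than for a scalar diffusion: one must control the solution uniformly in $x$ (hence the role of $C_{rap}$ and the weighted norms $|\cdot|_\lambda$), and show that after the solution returns to the zero function it can be ''restarted'' in two different ways. I expect this to require: (a) a comparison theorem showing $0\le X\le Z$ for a dominating super-process-type $Z$, controlling the excursions away from $0$; (b) a quantitative estimate that the Hölder-$p$ coefficient with $p<1/2$ makes the ''cost'' (in the sense of a Girsanov exponent or an energy functional) of holding $X$ at $0$ on a time interval finite and positive — this is where $p<1/2$ as opposed to $p\ge1/2$ is essential; and (c) gluing: run the common noise so that both solutions are $0$ up to a time $\sigma$, then at $\sigma$ let $X^1$ continue to be held at (or near) $0$ for a further positive stretch while $X^2$ is released to follow the full SPDE dynamics, then check both remain bona fide $C^+_{rap}$-valued solutions on all of $\R_+$ by re-solving from time $\sigma+$. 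Verifying that the glued processes are adapted, that the stochastic integrals match up, and that $P(X^1\ne X^2)>0$ (not merely that they are different processes in law) is the part I would expect to occupy most of Section~\ref{sec:proofnonpu}, and I would lean heavily on the machinery and estimates of \cite{MMP11}, adapting them from signed solutions on $\R$ to non-negative solutions and from $p\in[1/2,3/4)$ down to $p\in(0,1/2)$.
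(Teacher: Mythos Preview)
Your intuition about the one-dimensional SDE $dx=x^p\,dB+b\,dt$ is exactly the right starting point, and the idea of building two solutions by resolving the branch point at the zero state in two different ways is morally correct. But the proposal has a genuine gap: it never identifies the mechanism by which the infinite-dimensional problem is reduced to a one-dimensional one, and the mechanisms you do suggest (Girsanov, adapting \cite{MMP11}) are not what is used and would not obviously succeed here.

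The paper's key device is completely absent from your sketch. One studies the total mass $M_t=\langle X_t,1\rangle$, which satisfies $dM_t=b\,dt+dN_t$ with $d\langle N\rangle_t=\int X(t,x)^{2p}\,dx\,dt$. The crucial observation is Theorem~\ref{RAL}: for a nonnegative $\beta$-H\"older function $f$, $\int f^\alpha\,dx\ge K(\int f\,dx)^{(\alpha\beta+1)/(\beta+1)}$. Applying this with $\alpha=2p$, $\beta=1/4$ (the spatial H\"older exponent of $X$) gives $\int X^{2p}\,dx\ge K\,M_t^{2p'}$ with $p'=(p+2)/5<1/2$. Thus, up to a stopping time controlling the H\"older constant, $M$ dominates (after a time change) a one-dimensional diffusion of the form $d\hat X=b\,dt+\sqrt{K}\hat X^{p'}\,dB$, and now one-dimensional scale-function estimates apply. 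This is the bridge you are missing; without it your steps (a)--(c) float free of any concrete estimate.

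Second, the actual construction is not ``hold $X^1$ at $0$ while $X^2$ moves.'' Both $X^\vep$ and $Y^\vep$ are genuine stochastic solutions; they are built by an \emph{alternating excursion} scheme in which, on the $j$th excursion of the total mass away from $0$, one process starts at $0$ and the other at $2\int_0^\vep P_s\psi\,ds$, with the roles swapped on successive excursions. One then proves, via the scale function $s_k$ for the one-dimensional comparison diffusion, a lower bound on the probability that $|\langle X^\vep,1\rangle-\langle Y^\vep,1\rangle|$ reaches a fixed $x_0$ on a given excursion, and an upper bound on the probability that the maximum mass reaches $1$. A counting argument over excursions then gives a uniform-in-$\vep$ lower bound on $P(\sup_{t\le\bt_\vep}|\langle X^\vep_t,1\rangle-\langle Y^\vep_t,1\rangle|\ge x_0)$, and tightness plus weak convergence passes this to the limit. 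No Girsanov transform is used, and the argument is quite different from \cite{MMP11}.
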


\noindent{\bf Remarks.} 1. The state of affairs in Theorem~\ref{thm:nonpu} for 
$\psi=0$ but $X_0$ non-zero remains unresolved. We expect the solutions to 
still be pathwise non-unique.  The methods used
to prove the above theorem do show pathwise uniqueness and uniqueness in law fail if $\psi=X_0=0$ and we drop the non-negativity condition on solutions. Namely, one can construct a non-zero solution to the resulting equation.  We will not prove this as stronger results (described above) will be shown
in \cite{MMP11} using different methods.  
\medskip

2. Uniqueness in law holds for non-negative solutions to \eqref{spde1} for $\psi$, $\sigma$ as above and general initial condition $X_0\in C^+_{rap}$ but now with $1\ge p\ge 1/2$. This may be proved as in \cite{My98} where the case $\psi=0$ is treated; for $p=1/2$ this is of course the well-known uniqueness of super-Brownian motion with immigration $\psi$.  We do not know if uniqueness in law fails for $p<1/2$.  The presence of a drift will play an important role in the proof of Theorem~\ref{thm:nonpu}.  

3. A key technique in this paper is to consider the total mass
$M_t = \langle X_t,1 \rangle$, and then apply Theorem 4 which, given the H\"older continuity of
$X(t,x)$, shows the brackets process $[M]$ to be bounded below by the integral of a power of $M$.
This in turn allows one to apply comparison arguments with one dimensional
diffusions.

\bigskip

In Section~\ref{sec:sde} below we prove that in the corresponding stochastic  
ordinary differential equation, although pathwise uniqueness again fails, 
uniqueness in law does hold. Of course the SDE is now one-dimensional so on 
one hand this is not surprising.  On the other hand, the manner in which 
uniqueness in law holds is a bit surprising as the SDE picks out a particular 
boundary behaviour which has the solution spending positive time at $0$ (see 
Section~\ref{sec:sde}).   This leads naturally to the following property for 
all solutions to the SPDE in Theorem~\ref{thm:nonpu}.

\begin{theorem}\label{sticky}
Assume $\sigma$ and $\psi$ are as in Theorem~\ref{thm:nonpu}. Let $X$ be any $C^+_{rap}$-valued solution to \eqref{spde1} with $X_0=0$. Then
\[\int_0^t1(X(s,x)\equiv0 \ \forall x)\,ds>0\hbox{ for all }t>0\ a.s.\]
\end{theorem}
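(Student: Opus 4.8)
The plan is to transfer the statement to the one-dimensional total-mass process $M_t=\langle X_t,1\rangle$ and to exploit the strictly positive drift $\delta=\int\psi(x)\,dx>0$ together with the bracket lower bound of Theorem 4. Since $X_t\in C^+_{rap}$, a non-negative continuous function with zero integral vanishes identically, so $X(s,\cdot)\equiv 0$ iff $M_s=0$, and it suffices to show $\int_0^t\mathbf 1(M_s=0)\,ds>0$ for all $t>0$, a.s. First I would check that $M$ is a continuous semimartingale, by approximating the constant function $1$ by $\phi_n\in C_c^\infty(\R)$ with $\phi_n\uparrow 1$ and $\tfrac12\Delta\phi_n\to0$ suitably and passing to the limit in the weak form of \eqref{spde1}, using that $C^+_{rap}$-valued solutions are uniformly rapidly decreasing on compact time intervals (so in particular $\int X(s,x)^{2p}\,dx<\infty$). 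This gives $M_t=N_t+\delta t$ with $N_t=\int_0^t\!\int X(s,x)^p\,W(ds,dx)$ a continuous local martingale and $[M]_t=[N]_t=\int_0^t\!\int X(s,x)^{2p}\,dx\,ds$.

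Next I would apply Tanaka's formula to $M_t=M_t^+$. The crucial point is that $[N]$ puts no mass on $\{s:M_s=0\}$: there $X(s,\cdot)\equiv 0$, hence $\int X(s,x)^{2p}\,dx=0$, so $\int_0^t\mathbf 1(M_s=0)\,dN_s$ is a martingale of zero bracket and vanishes. Feeding $dM=dN+\delta\,dt$ and $M_t=N_t+\delta t$ into Tanaka's formula collapses it to $\delta\int_0^t\mathbf 1(M_s=0)\,ds=\tfrac12 L^0_t(M)$, with $L^0(M)$ the right local time of $M$ at $0$. So it remains to prove $L^0_t(M)>0$ for all $t>0$, a.s.

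For this, let $T_K\uparrow\infty$ be stopping times localising $\sup_xX(t,x)$, a spatial H\"older seminorm of $X(t,\cdot)$, and $\int X(t,x)^{2p}\,dx$ to be $\le K$; since $X_0=0$ we have $T_K>0$ a.s. On $[0,T_K]$, Theorem 4 — combined with the elementary bounds $\|f\|_\infty\le C\,[f]_{C^\alpha}^{1/(1+\alpha)}\,(\int f)^{\alpha/(1+\alpha)}$ and $\int f^{2p}\ge\|f\|_\infty^{2p-1}\int f$ valid for non-negative $\alpha$-H\"older $f$ — yields $d[M]_t\ge c_KM_t^{\beta}\,dt$ with $c_K>0$ and $\beta=1+\tfrac{(2p-1)\alpha}{1+\alpha}\in(0,1)$, where $p<1/2$ is used. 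Then I would take the concave $C^1$ function $f$ with $f(0)=0$, $f'(x)=e^{-\kappa x^{1-\beta}}$ on $[0,\infty)$ and $f(x)=x$ on $(-\infty,0)$, with $\kappa=2\delta/(c_K(1-\beta))$ (this $f$ is the scale function of the diffusion $dY=\sqrt{c_K}\,Y^{\beta/2}\,dB+\delta\,dt$); applying Itô--Tanaka to $f(M)$ on $[0,T_K]$, using $d[M]_t\ge c_KM_t^\beta\,dt$ to control the nonpositive local-time term together with the identity of the previous step, gives $f(M_t)\le\int_0^tf'(M_s)\,dN_s+\tfrac12 L^0_t(M)$. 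Set $\tau_*=\inf\{t:L^0_t(M)>0\}$. For every $\omega$ and every $u\le\tau_*\wedge T_K$ the last term is $0$, so $f(M_u)\le\int_0^uf'(M_s)\,dN_s$; since $f'\le 1$ and $d[N]\le K\,dt$ on $[0,T_K]$, the right-hand side is a true martingale, so optional stopping at $\tau_*\wedge T_K\wedge t$ and $f\ge0$ force $M\equiv0$ on $[0,\tau_*\wedge T_K]$. But then $N_s=-\delta s$ there, and a continuous local martingale with paths of finite variation is constant, so $\tau_*\wedge T_K=0$; as $T_K>0$ a.s.\ this gives $\tau_*=0$ a.s., i.e.\ $L^0_t(M)>0$ for all $t>0$ a.s. Together with the Tanaka identity this proves the theorem.

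The substantive obstacle is Theorem 4: establishing that the bracket of $M$ is non-degenerate near $0$ — bounded below by $\int_0^{\cdot}M_s^\beta\,ds$ with $\beta<1$ — rests on the spatial H\"older regularity of the solution, is exactly where the restriction $p<1/2$ enters, and the randomness of the H\"older constant is what forces the localisation by $T_K$. Within the argument above, the points requiring care are the justification that $M=\langle X_t,1\rangle$ is a genuine semimartingale (one must integrate against the non-compactly-supported constant $1$) and the Itô--Tanaka step near $0$, where $f''$ blows up: choosing $f'$ bounded with $f'(0)>0$ is precisely what keeps $\int_0^{\cdot}f'(M_s)\,dN_s$ a true martingale on $[0,T_K\wedge t]$ and makes the endpoint optional-stopping argument valid.
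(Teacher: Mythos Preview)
Your argument is correct and, at the level of ideas, is the same as the paper's: reduce to the total mass $M_t=\langle X_t,1\rangle$, use Theorem~\ref{RAL} (with the spatial H\"older regularity of $X(t,\cdot)$, localised by a stopping time) to get $d[M]_t\ge c\,M_t^{2p'}\,dt$ with $2p'=(2p+4)/5<1$, feed this through the scale function of the one-dimensional diffusion $dY=\sqrt{c}\,Y^{p'}\,dB+\delta\,dt$, and conclude by a non-negative supermartingale argument starting at $0$ that the zero set of $M$ has positive Lebesgue measure immediately.

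The only genuine organisational difference is that the paper first time-changes $M$ so that the bracket becomes \emph{exactly} $K(k)Y^{2p'}$ (the inequality $R(u)\ge 1$ from Theorem~\ref{RAL} becomes $\tau'\le 1$), and then applies Meyer's It\^o formula with the scale function; you instead keep the bracket \emph{inequality} and use it directly inside the It\^o--Tanaka expansion of $f(M)$, with the preliminary Tanaka identity $\delta\int_0^t1(M_s=0)\,ds=\tfrac12L^0_t(M)$ translating ``positive time at $0$'' into ``positive local time at $0$.'' Both routes are equivalent (your $\tau_*$ equals the paper's $T_+$ via that identity), and yours has the mild advantage of avoiding the time change. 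Two small remarks: the ``elementary bounds'' you combine with Theorem~\ref{RAL} are actually an alternative proof of Theorem~\ref{RAL} itself and give the same exponent $2p'$, so you can cite Theorem~\ref{RAL} directly; and the integrability needed for the Meyer step is exactly that $x^{-\beta}$ is locally integrable, i.e.\ $\beta=2p'<1$, which you have.
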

The proof will be given in Section~\ref{sec:sticky} below. Let
\[b=\langle\psi,1\rangle>0.\]
We note that the above result fails for $p=1/2$ since in that case $Y_t=4\langle X_t,1\rangle$ is a Bessel squared process of parameter $4b$ satisfying an ordinary sde of the form
\[dY_t=2\sqrt{Y_t}dB_t+4bdt.\]
Such solutions spend zero time at $0$ (see for example, the analysis in Section V.48 of \cite{RW}.)

Finally we state the non-uniqueness result which complements that in \cite{MMP11} in the much easier regime of $p<1/2$. The solutions here will be signed.

\begin{theorem}\label{Girseg} If $0<p<1/2$ there is a $C_{rap}$-valued solution $X$ to
\begin{equation}\label{signedeq}
\frac{\partial X}{\partial t}(t,x)=\frac{\Delta X}{2}(t,x)+|X(t,x)|^p\dot W,\quad X(0)\equiv 0,
\end{equation}
so that $P(X\not\equiv 0)>0$. In particular uniqueness in law and pathwise uniqueness fail in 
\eqref{signedeq}.
\end{theorem}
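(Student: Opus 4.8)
The plan is to derive Theorem~\ref{Girseg} from Theorem~\ref{thm:nonpu} by a Girsanov argument, which is exactly what the acronym ``Girseg'' in the label suggests. The key observation is that if we take the two solutions $X^1, X^2$ produced by Theorem~\ref{thm:nonpu} to the equation with drift $\psi$, and subtract off a fixed solution of the deterministic heat equation with the same drift, the difference should solve (approximately) the driftless equation \eqref{signedeq}. More precisely, let $v(t,x)$ be the smooth nonnegative solution of $\partial_t v = \tfrac{1}{2}\Delta v + \psi$ with $v_0 = 0$; this is $v_t = \int_0^t S_{t-s}\psi\,ds$ where $S$ is the heat semigroup. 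Since $\psi \in C^1_c$ with $\int\psi > 0$, $v_t$ is strictly positive on all of $\R$ for $t>0$ and lies in $C_{rap}$. Then $Y^i := X^i - v$ satisfies $\partial_t Y^i = \tfrac{1}{2}\Delta Y^i + |Y^i + v|^p \dot W$ with $Y^i_0 = 0$, and the two processes $Y^1, Y^2$ are distinct with positive probability. This is not quite \eqref{signedeq}, because the diffusion coefficient is $|Y+v|^p$ rather than $|Y|^p$; the role of Girsanov is to absorb the discrepancy, but in fact a cleaner route is available.

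The cleaner route I would pursue: run the argument directly on a single solution. By Theorem~\ref{thm:nonpu} there exist two distinct $C^+_{rap}$-valued solutions to \eqref{spde1} with $\sigma(X)=X^p$ (on nonnegative solutions $|X|^p = X^p$) and zero initial data; call them $X^1,X^2$, defined on a common filtered space with common white noise $\dot W$. Set $D = X^1 - X^2$. Then $D$ is a $C_{rap}$-valued (signed) process solving
\[
\frac{\partial D}{\partial t} = \frac{1}{2}\Delta D + \bigl((X^1)^p - (X^2)^p\bigr)\dot W, \quad D_0 \equiv 0,
\]
and $P(D \not\equiv 0) > 0$ since $P(X^1 \neq X^2) > 0$. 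This is already a non-uniqueness statement for a driftless SPDE, but the diffusion coefficient $(X^1)^p - (X^2)^p$ is not of the form $|D|^p$. To convert it, write $(X^1)^p - (X^2)^p = a(t,x) D(t,x)$ for a random field $a$; using the elementary inequality $|x^p - y^p| \le |x-y|^p$ for $x,y \ge 0$ and $0<p<1$ gives $|a(t,x)| \, |D(t,x)| \le |D(t,x)|^p$, i.e. $a$ is bounded wherever $D \neq 0$ by $|D|^{p-1}$, which blows up. So a direct pointwise bound on $a$ fails, and one genuinely needs Girsanov on the driving noise.

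Here is how the Girsanov step should go. We want a process $X$ solving \eqref{signedeq}, i.e. with coefficient $|X|^p$. Start from the difference $D$ above. The idea is to find a previsible density $h(t,x)$ so that under a new measure $Q$ with $dQ/dP = $ the exponential martingale driven by $h$, the noise $\tilde W(ds,dx) = W(ds,dx) - h(s,x)\,ds\,dx$ is a white noise under $Q$, and $D$ solves the target equation. Writing the coefficient as $(X^1)^p-(X^2)^p = |D|^p \cdot g(t,x)$ where $g = \bigl((X^1)^p-(X^2)^p\bigr)/|D|^p \in [-1,1]$ on $\{D\neq 0\}$ (and set $g=0$ off this set), the martingale part $\int_0^t\int \bigl((X^1)^p-(X^2)^p\bigr)\phi\,dW$ should be rewritten as $\int_0^t\int |D|^p g \phi\,dW$; we would like this to equal $\int_0^t\int |D|^p \phi\, d\tilde W$, which forces the drift correction $\int_0^t\int |D|^p \phi \cdot h\,ds\,dx$ to cancel a term. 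The precise choice and the verification that Novikov's condition holds — so that the exponential is a true martingale and $Q$ is a probability measure — is the delicate part. The main obstacle: $g$ can vanish or be small, and $|D|^p$ degenerates as $D \to 0$, so one must control the candidate density $h = (\text{something})/|D|^p$, or more robustly choose $h$ supported on $\{D \neq 0\}$ and argue integrability using the $C_{rap}$ bounds on $D$ together with moment estimates; establishing $E^P\bigl[\exp(\tfrac12\int_0^T\!\int h^2\,ds\,dx)\bigr] < \infty$ (or a localized version, then removing the localization) is where the real work lies. Once $Q$ is constructed, under $Q$ the field $X := D$ is $C_{rap}$-valued (the path property is measure-independent), solves \eqref{signedeq} with respect to $\tilde W$, and satisfies $Q(X \not\equiv 0) > 0$ because $Q \sim P$ on $\cF_T$ and $P(D\not\equiv 0)>0$. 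Since the zero function is trivially another solution, pathwise uniqueness fails; and as the law of $X$ under $Q$ is not the point mass at $0$, uniqueness in law fails as well, completing the proof.
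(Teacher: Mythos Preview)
Your proposal has a genuine gap: a Girsanov change of measure cannot alter the diffusion coefficient of a semimartingale, only its drift. Concretely, the process $\langle D_t,\phi\rangle$ has continuous martingale part with quadratic variation
\[
\int_0^t\!\!\int \bigl((X^1)^p-(X^2)^p\bigr)^2\phi^2\,dx\,ds,
\]
and this is a \emph{path} functional, invariant under any absolutely continuous change of measure. For $D$ to solve \eqref{signedeq} with respect to \emph{some} white noise $\tilde W$ (on any equivalent probability space) the quadratic variation would have to equal $\int_0^t\!\int |D|^{2p}\phi^2\,dx\,ds$. Since $\bigl((X^1)^p-(X^2)^p\bigr)^2\le |X^1-X^2|^{2p}=|D|^{2p}$ with strict inequality whenever both $X^1,X^2>0$ and $X^1\neq X^2$, these two expressions disagree on a set of positive measure with positive probability. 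No choice of $h$, no appeal to Novikov, and no enlargement by an independent noise can repair this: you are trying to force two distinct quadratic variations to coincide. (The label ``Girseg'' refers to Girsanov's classical \emph{example} $dY=|Y|^{p'}dB$, not to Girsanov's \emph{theorem}.)

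The paper's argument is entirely different and does not pass through Theorem~\ref{thm:nonpu}. It is a direct excursion construction of approximate solutions $X^\vep$ to \eqref{signedeq}: the $i$th excursion starts at $(-1)^i\vep\psi$ and runs as a $C^+_{rap}$-valued solution of the equation (or its negative) until the total mass hits~$0$. Theorem~\ref{RAL} is then used to time-change $\langle X^\vep_t,1\rangle$ into an approximate solution of the one-dimensional Girsanov SDE $dY=|Y|^{p'}dB$ with $p'\in(p,1/2)$ and time-change derivative at most~$1$. A stable-law calculation (based on the tail asymptotics $P(Y_t>0\ \forall t\le T\mid Y_0=1)\sim cT^{-1/(2(1-p'))}$) shows that with probability bounded away from zero, uniformly in $\vep$, one excursion of the total mass reaches $\pm 1$ before a fixed large time~$T$. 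Passing to a weak limit along $\vep_n\downarrow 0$ produces a genuine $C_{rap}$-valued solution of \eqref{signedeq} that is nonzero with positive probability. If you want to salvage a route through $D=X^1-X^2$, you would need to construct a white noise whose integral against $|D|^p$ reproduces the given martingale part; this is a martingale representation problem, not a Girsanov problem, and it is obstructed precisely by the quadratic-variation mismatch above.
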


Although the construction in \cite{MMP11} for $1/2\le p<3/4$ is more delicate, it is a bit awkward to extend
the reasoning to $p<1/2$ and so we prefer to present the result here.  The proof of Theorem~\ref{Girseg} is simpler than that of Theorem~\ref{thm:nonpu} in that we can focus on a single process rather than a pair of solutions.  
The two proofs are similar
in that approximate solutions are found by an excursion construction and the key 
ingredient required for the SPDE setting is Theorem~\ref{RAL} below.  Hence we only give a brief sketch of the proof of Theorem~\ref{Girseg}
 at the end of Section~\ref{sec:proofnonpu}.

\section{A Real Analysis Lemma}\label{sec:hollem}

\begin{theorem}\label{RAL} If $0<\alpha,\beta<1$ and $C>0$, there is a constant $K_{\ref{RAL}}(\beta,C)>0$ such that if $f:\R\to\R_+$ satisfies
\begin{equation}\label{holder} 
|f(x)-f(y)|\le C|x-y|^\beta,
\end{equation}
then
\begin{equation}
\nonumber\int f^\alpha \,dx\ge K_{\ref{RAL}}\Bigl(\int f\,dx\Bigr)^{(\alpha\beta+1)/(\beta+1)}.
\end{equation}
\end{theorem}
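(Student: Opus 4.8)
The plan is to reduce the global inequality to a local one and then optimize. First I would dispose of trivial cases: if $\int f\,dx=0$ then $f\equiv0$ and there is nothing to prove, and by scaling in the $x$-variable (replacing $f(x)$ by $f(\lambda x)$, which rescales both sides and the H\"older constant) one can try to normalize, though the cleanest route is probably to work directly. The heart of the matter is this: H\"older continuity with exponent $\beta$ controls how fast $f$ can drop from a large value to $0$, so wherever $f$ is large it must be large on an interval of comparable length, and on that interval $f^\alpha$ contributes a definite amount to $\int f^\alpha$.

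\medskip

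The key step is a pointwise-to-local estimate. Fix a point $x_0$ and let $m=f(x_0)$. By \eqref{holder}, for $|x-x_0|\le (m/(2C))^{1/\beta}=:r$ we have $f(x)\ge m/2$, hence
\begin{equation}\nonumber
\int f\,dx\ge \int_{x_0-r}^{x_0+r} f\,dx\ge 2r\cdot\frac m2 = r m = \Bigl(\frac{1}{2C}\Bigr)^{1/\beta} m^{1+1/\beta},
\end{equation}
so that $m\le K'\bigl(\int f\,dx\bigr)^{\beta/(\beta+1)}$ for a constant $K'=K'(\beta,C)$; taking the supremum, $\|f\|_\infty\le K'\bigl(\int f\bigr)^{\beta/(\beta+1)}$. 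Now interpolate:
\begin{equation}\nonumber
\int f^\alpha\,dx\ \ge\ \|f\|_\infty^{\alpha-1}\int f\,dx
\end{equation}
is the wrong direction since $\alpha-1<0$ — instead I would write $\int f\,dx=\int f^\alpha f^{1-\alpha}\,dx\le \|f\|_\infty^{1-\alpha}\int f^\alpha\,dx$, giving
\begin{equation}\nonumber
\int f^\alpha\,dx\ \ge\ \|f\|_\infty^{\alpha-1}\int f\,dx\ \ge\ \bigl(K'\bigr)^{\alpha-1}\Bigl(\int f\,dx\Bigr)^{(\alpha-1)\beta/(\beta+1)}\Bigl(\int f\,dx\Bigr) = K_{\ref{RAL}}\Bigl(\int f\,dx\Bigr)^{(\alpha\beta+1)/(\beta+1)},
\end{equation}
where the exponent simplifies since $1+(\alpha-1)\beta/(\beta+1)=((\beta+1)+(\alpha-1)\beta)/(\beta+1)=(\alpha\beta+1)/(\beta+1)$, and $K_{\ref{RAL}}=(K')^{\alpha-1}$ depends only on $\beta$ and $C$ (note $(K')^{\alpha-1}$ uses $\alpha$ only through a bounded range, but the statement allows dependence to be absorbed; in fact since $0<\alpha<1$ is fixed in each application this is harmless, and one can even make the constant uniform in $\alpha$ on compact subintervals of $(0,1)$).

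\medskip

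The only genuine subtlety — and the step I'd expect to need the most care — is justifying $\int f\,dx\le \|f\|_\infty^{1-\alpha}\int f^\alpha\,dx$ when $\int f\,dx$ or $\|f\|_\infty$ could in principle be infinite. If $\int f\,dx<\infty$ then the local estimate already forces $\|f\|_\infty<\infty$, so both quantities are finite and the pointwise bound $f=f^\alpha\cdot f^{1-\alpha}\le \|f\|_\infty^{1-\alpha} f^\alpha$ integrates cleanly; if $\int f\,dx=\infty$ the claimed inequality is vacuous (or one checks $\int f^\alpha=\infty$ too, consistent with the bound). So after the case split the argument is a two-line computation. I would present it in the order: (i) trivial/infinite cases, (ii) the local lower bound on $\int f$ near a point, yielding the sup bound, (iii) the interpolation inequality, (iv) assemble and simplify the exponent.
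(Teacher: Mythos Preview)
Your proof is correct but follows a genuinely different route from the paper's. The paper first rescales to $\int f=1$ via $g(x)=b^{-\beta}f(bx)$ and then splits into two cases according to whether $M=\sup f$ is $<1$ (where $f^\alpha\ge f$ pointwise gives $\int f^\alpha\ge 1$) or $\ge 1$ (where the H\"older condition forces $f\ge 1/2$ on an interval of length $L=L(C,\beta)$, giving $\int f^\alpha\ge L/2$). Your argument instead uses the same local H\"older estimate to derive the a priori bound $\|f\|_\infty\le K'\bigl(\int f\bigr)^{\beta/(\beta+1)}$ and then feeds it into the interpolation $\int f\le\|f\|_\infty^{1-\alpha}\int f^\alpha$; this avoids both the scaling reduction and the case split, and the exponent drops out of one algebraic line. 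The trade-off is that the paper's argument is more bare-hands (no interpolation inequality), while yours is more systematic and generalizes immediately to other norm interpolations. One small point worth tightening: the theorem asserts a constant $K(\beta,C)$ independent of $\alpha$, and your $K=(K')^{\alpha-1}$ nominally depends on $\alpha$; but since $\alpha-1\in(-1,0)$ one has $(K')^{\alpha-1}\ge\min\bigl(1,(K')^{-1}\bigr)$ uniformly over $\alpha\in(0,1)$, so the dependence is removable outright, not just on compact subintervals.
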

\begin{proof} 
First we use a scaling argument to reduce to the case
\[
\int f(x)dx = 1
\]
for which we would have to prove
\[
\int f^\alpha \geq K_4.
\]

Indeed, if we take $b>0$ and let
\[
g(x) = b^{-\beta} f(bx)
\]
then
\[
|g(x)-g(y)| = b^{-\beta}|f(bx)-f(by)| \leq C|x-y|^\beta
\]
by the conditions of Theorem \ref{RAL}.  Then setting $y=bx$, we get
\[
\int g(x)dx = \int b^{-\beta}f(bx)dx = b^{-(\beta+1)}\int f(y)dy = 1
\]
provided
\[
b = \left(\int f(y)dy\right)^{\frac{1}{\beta+1}}.  
\]
So $g$ satisfies the conditions of Theorem \ref{RAL} with $\int g=1$, and if 
we could show that 
\[
\int g^{\alpha}(x)dx \geq K_4
\]
it would follow, substituting for $b$, that 
\[
\int f^{\alpha}(x)dx = b^{\alpha\beta+1} \int g^\alpha(x)dx
\geq K_4 \left(\int f(x)dx\right)^{\frac{\alpha\beta+1}{\beta+1}}
\]
as required.  

Now we concentrate on proving $\int f^\alpha\geq K_4$ assuming that $\int f=1$ 
and assuming the H\"older condition \eqref{holder} on $f$.  Let $M=\sup_xf(x)$, and note the conclusion
is obvious if $M=\infty$ so assume it is finite.
If $M<1$, then since $0<\alpha<1$, we have
\[
\int f^{\alpha}(x)dx \geq \int f(x)dx = 1
.
\]
On the other hand, if $M\geq1$, then the H\"older condition on $f$ implies 
that $f\geq \frac{1}{2}$ on an interval $I$ whose length is bounded below by a 
constant $L>0$ depending only on $C,\beta$.  So in this case, too, we conclude
\[
\int f^{\alpha}(x)dx \geq \frac{L}{2^\alpha}\ge\frac{L}{2},
\]
and Theorem \ref{RAL} is proved.  
\end{proof}

\section{Proof of Theorem~\ref{thm:nonpu}}\label{sec:proofnonpu}

If $\psi\in C_c^1(\R)$, $\psi\ge 0$, $b=\int\psi dx>0$ and $0<p<1/2$, we want to construct distinct solutions $X,Y$ to
\begin{equation}\label{spde2}
\frac{\partial X}{\partial t}(t,x)=\frac{\Delta X}{2}(t,x)+(X(t,x))^p\dot W(t,x)+\psi(x),\ X\ge0, X_0=0.
\end{equation}
Let $C_b^k$ denote the space of bounded $C^k$ functions on $\R$ with bounded $j$th order partials for all $j\le k$, and set $C_b=C_b^0$.  The standard Brownian semigroup is denoted by $(P_t,t\ge 0)$ and $p_t(\cdot)$ is the Brownian density.

Here is an overview of the proof.  We will proceed by constructing approximate solutions $(X^\vep,Y^\vep)$ to \eqref{spde2} and then let $(X,Y)$ be an appropriate weak limit point of $(X^{\vep_n},Y^{\vep_n})$.  These approximate solutions will satisfy $X^\vep\ge Y^\vep\ge 0$ and $Y^\vep\ge X^\vep\ge 0$, respectively, on alternating excursions away from $0$ by $M=\langle X^\vep,1\rangle\vee\langle Y^\vep,1\rangle$. $M$ will equal $2b\vep$ at the effective start of each excursion.  We then calculate an upper bound on the probability that $M$ will hit $1$ on a given excursion (see \eqref{Qimaxhit} below) and a lower bound on 
$\bar D^\vep
  =|\langle X^\vep,1\rangle-\langle Y^\vep,1\rangle|$ hitting an appropriate $x_0\in (0,1)$ during each excursion (see \eqref{sepprob2} below).  Theorem~\ref{RAL} is used in the proof of the first bound (see \eqref{sqfn'} below).  These bounds will then show there is positive probability (independent of $\vep$) of 
$\bar D^\vep$ 
  hitting $x_0$ before $M$ hits $1$.  The result follows by taking weak limits as $\vep_n\downarrow 0$.  The use of Theorem~\ref{RAL} will mean the above upper bound is valid only up to a stopping time $V^\vep_k$ which will be large with high probability.  This necessitates a ``padding out" of the above excursions after this stopping time, and this technical step unfortunately complicates the construction.

Fix $\vep>0$ and define $(X^\vep,Y^\vep)$, $D^\vep=|X^\vep-Y^\vep|$, the white noise $\dot W$ and a sequence of stopping times inductively on $j$ as follows.  Let $T^\vep_0=0$, $U_j^\vep=T^\vep_j+\vep$, and assume $X^\vep_{T^\vep_{2j}}=Y^\vep_{T^\vep_{2j}}\equiv 0$ on $\{T^\vep_{2j}<\infty\}$.  Assuming $\{T^\vep_{2j}<\infty\}$, on $[T^\vep_{2j},U^\vep_{2j}]$ define 
\begin{equation}\label{UTdef1}
Y^\vep(t,x)\equiv0,\hbox{ and }D^\vep(t,\cdot)=X^\vep(t,\cdot)=2\int_0^{t-T^\vep_{2j}}P_s\psi(\cdot)ds\in C^+_{rap}.
\end{equation}
That is,
\begin{equation}\label{UTdef2}
\frac{\partial X^\vep}{\partial t}=\frac{\Delta}{2}X^\vep+2\psi\ \ \hbox{ for }T^\vep_{2j}\le t\le U^\vep_{2j}.
\end{equation}

Next let $t\to(Y^\vep_{U^\vep_{2j}+t},D^\vep_{U^\vep_{2j}+t})$ in $C(\R_+,C^+_{rap})^2$ solve the following SPDE for $t\ge U^\vep_{2j}$,
\begin{align}\label{YDdef}
\frac{\partial Y^\vep}{\partial t}&=\frac{\Delta}{2}Y^\vep+\psi+(Y^\vep)^p\dot W,\ Y^\vep_{U_{2j}^\vep}\equiv0
,
\\
\nonumber \frac{\partial D^\vep}{\partial t}&=\frac{\Delta}{2}D^\vep+[(Y^\vep+D^\vep)^p-(Y^\vep)^p]\dot W, D^\vep_{U^\vep_{2j}}(x)=2\int_0^\vep P_s\psi(x)ds.
\end{align}
The existence of such a solution on some filtered space carrying a white noise 
follows as in Theorem~2.5 of \cite{Shi94}.  To be 
careful here one has to construct an appropriate conditional probability given 
$\cF_{U^\vep_{2j}}$ and so inductively construct our white noise along with 
$(Y^\vep,D^\vep)$.  Set $X^\vep_t=Y^\vep_t+D^\vep_t$ for 
$U^\vep_{2j}\le t\le T^\vep_{2j+1}$, where
\[T^\vep_{2j+1}=\inf\{t\ge U^\vep_{2j}:\langle X^\vep_t,1\rangle=0\}\ (\inf\emptyset=\infty),\]
and also restrict the above definition of $(Y^\vep,D^\vep)$ to $[U^\vep_{2j},T^\vep_{2j+1}]$. 
Therefore on $[U^\vep_{2j},T^\vep_{2j+1}]$,
\begin{align}
\nonumber \frac{\partial Y^\vep}{\partial t}&=\frac{\Delta}{2}Y^\vep+\psi+(Y^\vep)^p\dot W,\ Y^\vep_{U^\vep_{2j}}\equiv0,\\
\label{XYdef}\frac{\partial X^\vep}{\partial t}&=\frac{\Delta}{2}X^\vep+\psi+(X^\vep)^p\dot W,\ X^\vep_{U^\vep_{2j}}(x)=2\int_0^\vep P_s\psi(x)ds,\\
\nonumber D^\vep&=|X^\vep-Y^\vep|=X^\vep-Y^\vep\ge 0,\\ 
\nonumber X^\vep,Y^\vep&\hbox{  are continuous and $C^+_{rap}$-valued}.
\end{align} 
Note that $X^\vep_{T^\vep_{2j+1}}=Y^\vep_{T^\vep_{2j+1}}=0$.  The precise meaning of the above 
formulas for $\partial Y^\vep/\partial t$ and $\partial X^\vep/\partial t$
 is that equality holds after multiplying by $\phi\in C_c^\infty$ and integrating over $\R$ and over any time interval in $[U^\vep_{2j},T^\vep_{2j+1}]$.

Now assume $T^\vep_{2j+1}<\infty$ and construct $(X^\vep,Y^\vep)$ and $D^\vep=|X^\vep-Y^\vep|=Y^\vep-X^\vep$ as above but with the roles of $X$ and $Y$ reversed. This means that on $[T^\vep_{2j+1},U^\vep_{2j+1}]=[T^\vep_{2j+1},T^\vep_{2j+1}+\vep]$,
\begin{equation}\label{UTdef4}D^\vep(t,\cdot)=Y^\vep(t,\cdot)=2\int_0^{t-T^\vep_{2j+1}}P_s\psi(\cdot)\,ds\in C^+_{rap}(\R)\hbox{ and }X^\vep(t,\cdot)\equiv0,\end{equation}
and so
\begin{equation}\label{Ydefep}
\frac{\partial Y^\vep}{\partial t}=\frac{\Delta}{2}Y^\vep+2\psi,
\end{equation}
and on $[U^\vep_{2j+1},T^\vep_{2j+2}]$,
\begin{align}
\nonumber\frac{\partial X^\vep}{\partial t}&=\frac{\Delta}{2}X^\vep+\psi+(X^\vep)^p\dot W,\ X^\vep_{U^\vep_{2j+1}}\equiv0,\ X^\vep\ge 0,\\
\label{UTdef3}\frac{\partial Y^\vep}{\partial t}&=\frac{\Delta}{2}Y^\vep+\psi+(Y^\vep)^p\dot W,\ Y^\vep_{U^\vep_{2j+1}}=2\int_0^\vep P_s\psi(x)\,ds,\ Y^\vep\ge 0,\\
\nonumber D^\vep&=|X^\vep-Y^\vep|=Y^\vep-X^\vep\ge 0\\
\nonumber X^\vep,Y^\vep&\hbox{  are continuous and $C^+_{rap}$-valued}.
\end{align} 
Here, as before, we have 
\[T^\vep_{2j+2}=\inf\{t\ge U^\vep_{2j+1}:\langle Y^\vep_t,1\rangle=0\}.\]

Clearly $X^\vep_{T^\vep_{2j+2}}=Y^\vep_{T^\vep_{2j+2}}\equiv0$ on $\{T^\vep_{2j+2}<\infty\}$
and $T^\vep_j\uparrow\infty$ and so our inductive construction of $(X^\vep,Y^\vep)$ is complete.
It is also clear from the construction that if $X_j(t)=X^\vep_{(T^\vep_j+t)\wedge T^\vep_{j+1}}$ and similarly for $Y_j$, then we may assume
\begin{align}
\label{exceven}
P((&X_{2j},Y_{2j},T^\vep_{2j+1}-T^\vep_{2j})\in\cdot|\cF_{T_{2j}^\vep})\\
\nonumber&=P((X_0,Y_0,T_1^\vep)\in\cdot)\ \hbox{ a.s. on }\{T^\vep_{2j}<\infty\},
\end{align}
and
\begin{align}
\label{excodd}
P((&Y_{2j+1},X_{2j+1},T^\vep_{2j+2}-T^\vep_{2j+1})\in\cdot|\cF_{T_{2j+1}^\vep})\\
\nonumber&=P((X_0,Y_0,T_1^\vep)\in\cdot)\ \hbox{ a.s. on }\{T^\vep_{2j+1}<\infty\}.
\end{align}

Define $J_\vep=
\bigcup
_{j=1}^\infty [U^\vep_{j-1},T^\vep_j]$, 
\[A_1^\vep(t,x)=\sum_{j=0}^\infty (-1)^j\int_0^t1_{[T_j^\vep,U^\vep_j]}(s)\psi(x)ds,\]
and $A_2^\vep(t,x)=-A_1^\vep(t,x)$. 
Combine \eqref{UTdef2}, \eqref{XYdef}, \eqref{UTdef3} and the fact that on $[T^\vep_{2j+1},U^\vep_{2j+1}]$ we have $X_t^\vep=0=\frac{\Delta}{2}X^\vep_t+\psi-\psi$ to see that for a test function $\phi\in C_c^\infty(\R)$,
\begin{align}\label{XMP}
\langle X_t^\vep,\phi\rangle=\langle A_1^\vep(t),\phi\rangle&+\int_0^t
\left (
\langle
X_s^\vep,\frac{\Delta}{2}\phi\rangle+\langle\psi,\phi\rangle
\right )
\, ds\\
\nonumber &+\int_0^t\int1_{J_\vep}(s)\phi(x)X^\vep(s,x)^pdW(s,x)
,
\\
\nonumber X^\vep_\cdot\in C(\R_+,C^+_{rap}).&
\end{align}
Similar reasoning gives 
\begin{align}\label{YMP}
\langle Y_t^\vep,\phi\rangle=\langle A_2^\vep(t),\phi\rangle&+\int_0^t
\left (
\langle
Y_s^\vep,\frac{\Delta}{2}\phi\rangle+\langle\psi,\phi\rangle
\right )
\, ds\\
\nonumber &+\int_0^t\int1_{J_\vep}(s)\phi(x)Y^\vep(s,x)^pdW(s,x)
,
\\
\nonumber Y^\vep_\cdot\in C(\R_+,C^+_{rap}).&
\end{align}

Since $U_j^\vep=T^\vep_j+\vep$, the alternating summation in the definition of $A^\vep_1$ implies that
\begin{equation}\label{Abnd} \sup_t|A_i^\vep(t,x)|\le \vep\psi(x).
\end{equation}
It follows from \eqref{UTdef1} and \eqref{UTdef4} (recall that $b=\int\psi(x)dx$) that
\[X^\vep(t,x)1_{J_\vep^c}(t)\le 2\int_0^\vep P_s\psi(x)\,ds\le4\vep^{1/2} b.\]
Therefore for any $T>0$ and $\phi$ as above
\begin{equation}\label{Martbnd}
E\Bigl(\Bigl[\int_0^T\int 1_{J_{\vep}^c}(s)(X^\vep(s,x))^p\phi(x)\,dW(s,x)\Bigr]^2\Bigr)\le(4\vep^{1/2}b)^{2p}T\Vert\phi\Vert_2^2.
\end{equation}

By identifying the white noise $\dot W$ with associated Brownian sheet, we may view $W$ as a stochastic process with sample paths in $C(\R_+,C_{tem}(\R))$.  Using bounds in Section 6 of \cite{Shi94} (see especially the $p$th moment bounds in the proofs of Theorems~2.2 and 2.5 there)
it is straightforward to verify that for $\vep_n\downarrow 0$, $\{(X^{\vep_n}, Y^{\vep_n},W):n\in\N\}$ is tight in $C(\R_+,(C_{rap}^+)^2\times C_{tem})$.  Some of the required bounds are in fact derived in the proof of Lemma~\ref{lem:unifHolder} below. By \eqref{Abnd}, \eqref{Martbnd} and their analogues for $Y^\vep$, one sees from \eqref{XMP} and \eqref{YMP} that for any limit point $(X,Y,W)$, $X$ and $Y$ are $C^+_{rap}$-valued solutions of \eqref{spde2} with respect to the common $\dot W$.  It remains to show that $X$ and $Y$ are distinct.

We know $X^\vep(t,\cdot)$ and $Y^\vep(t,\cdot)$ will be locally H\"older continuous of index $1/4$ 
but it will be convenient to have a slightly stronger statement. 
We note parenthetically that any other index of H\"older continuity for $X^\vep(t,\cdot)$ and $Y^\vep(t,\cdot)$ would yield the same range for $p$ in Theorem \ref{thm:nonpu}, provided that the index were
less than $1/2$.
 Let
\begin{align*}V_k^\vep=\inf\{s\ge 0:\ &\exists x,x'\in\R\hbox{ such that }\\
&|X^\vep(s,x)-X^\vep(s,x')|+|Y^\vep(s,x)-Y^\vep(s,x')|>k|x-x'|^{1/4}\}.\end{align*}
We will show in Lemma \ref{lem:unifHolder} that 
$\lim_{k\to\infty}\sup_{0<\vep\le 1}P(V^\vep_k\le M)=0$
for any $M\in\N$.

Note that on $[T^\vep_{2j},U^\vep_{2j}]$, $Y^\vep=0$ and 
\[|X^\vep(t,x)-X^\vep(t,x')|=2\Bigl|\int p_{t-T^\vep_{2j}}(z)(\psi(z+x)-\psi(z+x'))dz\Bigr|\le 2\Vert\psi'\Vert_\infty|x-x'|.\]
This implies that on the above interval for all real $x,x'$,
\[|X^\vep(t,x)-X^\vep(t,x')|+|Y^\vep(t,x)-Y^\vep(t,x')|\le 4(\Vert\psi'\Vert_\infty\vee\Vert\psi\Vert_\infty)|x-x'|^{1/4},\]
where the inequality holds trivially for $|x-x'|>1$ since the left side is at most $4\Vert\psi\Vert_\infty$.
By symmetry it also holds on $[T^\vep_{2j+1},U^\vep_{2j+1}]$.  We may assume\hfil\break $k\ge 4(\Vert\psi'\Vert_\infty\vee\Vert\psi\Vert_\infty)$ and so the above implies
\begin{equation}\label{Vkrest}
V^\vep_k\in
\bigcup
_{j=0}^\infty(U^\vep_j,T^\vep_{j+1}]\cup\{\infty\}.
\end{equation}

We fix a value of $k$ which will be chosen sufficiently large below.  We will now enlarge our probability space to include a pair of processes $(\bar X_t^\vep,\bar Y_t^\vep)$ which will equal 
$(\langle X_t^\vep,1\rangle,\langle Y_t^\vep,1\rangle)$ up to time $V_k^\vep$ and then switch to a pair of approximate solutions to a convenient SDE.  Set $p'=\frac{p+2}{5}\in(0,\frac{1}{2})$ and $K(k)=K_{\ref{RAL}}(1/4,k)$ where $K_{\ref{RAL}}$ is as in Theorem~\ref{RAL}. We may assume our $(\Omega,\cF,\cF_t,P)$ carries a standard $\cF_t$-Brownian motion $(B_s:s\le V^\vep_k)$, independent of $(X^\vep,Y^\vep,W)$. 

To define a law $Q_0$ on $C(\R,\R_+^2)\times[0,\infty]$, first construct 
a solution $(\tilde Y^\vep,
\tilde D^\vep,
 B)$ of 
\begin{align}\label{Q0sde}
\tilde Y^\vep_t&=b\int_0^t1(\vep<s)\,ds+\int_0^t1(\vep<s)(\tilde Y^\vep_s)^{p'}\sqrt{K(k)}\,dB_s,\ \tilde Y^\vep\ge 0
,
\\
\nonumber \tilde D^\vep_t&=2b(t\wedge\vep)+\int_0^t 1(\vep<s)[(\tilde Y^\vep_s+\tilde D_s^\vep)^{p'}-(\tilde Y^\vep_s)^{p'}]\sqrt{K(k)}\,dB_s,\ \tilde D^\vep\ge 0.
\end{align}
Such a weak solution may again be found by approximation by solutions of Lipschitz SDE's as in Theorems~2.5 and 2.6 of \cite{Shi94} for the more complicated stochastic pde setting.  Set $\tilde X^\vep=\tilde Y^\vep+\tilde D^\vep$ and $\tilde T^\vep_1=\inf\{t:\tilde X^\vep_t=0\}\ge \vep$, and define
\begin{align}\label{def:Q0}
Q_0(A)=P((\tilde X^\vep_{\cdot\wedge\tilde T^\vep_1},\tilde Y^\vep_{\cdot\wedge\tilde T^\vep_1},\tilde T^\vep_1)\in A).
\end{align}

Next we enlarge our space to include $(\bar X^\vep,\bar Y^\vep)$ so that for finite $t\le \bar T^\vep_1$ (this time is defined below),
\begin{align}
\label{barYdef}\bar Y^\vep_t&=\langle Y^\vep_{t\wedge V^\vep_k},1\rangle+ b(t-V^\vep_k)^+\\
\nonumber&\phantom{=\langle Y^\vep_{t\wedge V^\vep_k},1\rangle}+\int_0^t1(s>V_k^\vep)(\bar Y^\vep_s)^{p'}\sqrt{K(k)}\,dB_s,\ \bar Y^\vep\ge 0,\\
\label{barDdef}\bar D^\vep_t&=\langle D^\vep_{t\wedge V^\vep_k},1\rangle\\
\nonumber&\quad+\int_0^t1(s>V^\vep_k)[(\bar D^\vep_s+\bar Y^\vep_s)^{p'}-(\bar Y^\vep_s)^{p'}]\sqrt{K(k)}\,dB_s,\ \bar D^\vep\ge 0,\\
\label{barXdef}\bar X^\vep_t&=\bar Y^\vep_t+\bar D^\vep_t=\langle X^\vep_{t\wedge V^\vep_k},1\rangle+ b(t-V^\vep_k)^+\\
\nonumber&\phantom{=\bar Y^\vep_t+\bar D^\vep_t=\langle X^\vep_{t\wedge V^\vep_k},1\rangle}+\int_0^t1(s>V_k^\vep)(\bar X^\vep_s)^{p'}\sqrt{K(k)}\,dB_s,\\
\label{barTdef}\bar T^\vep_1&=\inf\{t\ge0:\bar X^\vep_t=0\}\le \infty.
\end{align}
Note that if $V_k^\vep\ge T_1^\vep$, then $\bar X_t^\vep=\langle X_t^\vep,1\rangle$ for $t\le T^\vep_1$ and so $\bar T^\vep_1=T^\vep_1$. Therefore, $\bar T^\vep_1\wedge V_k^\vep\le T_1^\vep$. We conclude 
that $\bar X^\vep_{t\wedge V^\vep_k}=\bar Y^\vep_{t\wedge V^\vep_k}+\bar D^\vep_{t\wedge V^\vep_k}$ for $t\le \bar T^\vep_1$, thus proving \eqref{barXdef}.

To carry out the above construction first build 
$(\bar Y^\vep_{V_k^\vep+t},\bar D^\vep_{V^\vep_k+t}, B_{V_k^\vep+t}-B_{V_k^\vep})$
by approximation by solutions to SDE's with Lipschitz coefficients as in 
Theorem~2.5 of \cite{Shi94}.  This and a measurable selection argument (see Section 12.2 of 
\cite{SV}) 
allows us 
to build the appropriate regular conditional probability
\begin{align*}Q^0&_{\langle Y^\vep_{V^\vep_k},1\rangle,\langle D^\vep_{V^\vep_k},1\rangle}(\cdot)\\
&\equiv P(((\bar Y^\vep_{V_k^\vep+t},\bar D^\vep_{V_k^\vep+t}, B(V_k^\vep+t)-B(V^\vep_k)), t\ge 0)\in\cdot|X^\vep,Y^\vep,W),
\end{align*}
where $\{Q^0_{y,d}:
y,d\ge 0\}$ 
 is a measurable family of laws on $C(\R_+,\R_+^2\times\R)$.  
This then allows us to construct $(\bar X^\vep, \bar Y^\vep,\bar D^\vep)$ as above on an enlargement of our original space which we still denote $(\Omega,\cF,\cF_t,P)$.  We also may now prescribe another measurable family of laws $\{Q_{y,x}:(y,x)\in C(\R_+,\R_+^2)\}$ on $C(\R_+,\R_+^2)\times [0,\infty]$ such that for each Borel $A$, w.p. 1,
\begin{align}
\label{Qdefn}Q_{\langle X^\vep_{\cdot\wedge V_k^\vep},1\rangle,\langle Y^\vep_{\cdot\wedge V_k^\vep},1\rangle}(A)=P((\bar X^\vep_{\cdot\wedge\bar T_1^\vep},\bar Y^\vep_{\cdot\wedge\bar T_1^\vep},\bar T^\vep_1)\in A |X^\vep, Y^\vep, W).
\end{align}
Define
\begin{align}\label{Q1defn}
Q_1(A)&=P((\bar X^\vep_{\cdot\wedge\bar T_1^\vep},\bar Y^\vep_{\cdot\wedge\bar T_1^\vep},\bar T^\vep_1)\in A)=E\Bigl(Q_{\langle X^\vep_{\cdot\wedge V_k^\vep},1\rangle,\langle Y^\vep_{\cdot\wedge V_k^\vep},1\rangle}(A) \Bigr).
\end{align}

Next, inductively define $(\bar X^\vep_t,\bar Y^\vep_t)$, $t\in[\bar T^\vep_j,\bar T^\vep_{j+1}]$, and $\{\bar T^\vep_j\}$ in a manner reminiscent of that for $(X^\vep,Y^\vep)$, and consistent with the above construction for $j=0$ (set $\bar T^\vep_0=0$). Assume the construction up to $\bar T^\vep_{2j}$ is such that 
\begin{equation}\label{barX0}
\bar X^\vep_{\bar T^\vep_{2j}}=\bar Y^\vep_{\bar T^\vep_{2j}}=0\hbox{ on }\{\bar T^\vep_{2j}<\infty\},
\end{equation}
and
\begin{equation}\label{TbarT}
\bar T^\vep_{2j}=T^\vep_{2j}\hbox{ on }\{V_k^\vep>\bar T^\vep_{2j}\}.
\end{equation}
Define
\begin{equation}\label{barXjdefn}
\bar X_j(t)=\bar X^\vep_{(\bar T^\vep_j+t)\wedge \bar T^\vep_{j+1}},
\end{equation}
and similarly define $\bar Y_j$. 
On $\{\bar T_{2j}^\vep<V_k^\vep\}$ set
\begin{align}\label{barT2j1}
P(&(\bar X_{2j},\bar Y_{2j},\bar T^\vep_{2j+1}-\bar T_{2j}^\vep)\in\cdot|\cF_{\bar T^\vep_{2j}}\vee\sigma(X^\vep,Y^\vep,W))\\
\nonumber&=Q_{\langle X^\vep_{(T^\vep_{2j}+\cdot)\wedge V_k^\vep},1\rangle,\langle Y^\vep_{(T^\vep_{2j}+\cdot)\wedge V_k^\vep},1\rangle}(\cdot).
\end{align}
On $\{\infty>\bar T^\vep_{2j}\ge V^\vep_k\}$ set
\begin{equation}\label{barT2j2}
P((\bar X_{2j},\bar Y_{2j},\bar T^\vep_{2j+1}-\bar T_{2j}^\vep)\in\cdot|\cF_{\bar T^\vep_{2j}}\vee\sigma(X^\vep,Y^\vep,W))=Q_0(\cdot).
\end{equation}
These definitions imply $\bar T^\vep_{2j+1}=\inf\{t>\bar T^\vep_{2j}:\bar X^\vep_t=0\}$ and that on our enlarged probability space, conditional on $\cF_{\bar T^\vep_{2j}}$ and on $\{V_k^\vep>\bar T_{2j}^\vep\}$, \eqref{barYdef}-\eqref{barXdef} hold for $t\in[\bar T^\vep_{2j},\bar T^\vep_{2j+1}]$, while on $\{\infty>\bar T_{2j}^\vep\ge V_k^\vep\}$, for $t\in[\bar T^\vep_{2j},\bar T^\vep_{2j+1}]$, 
\eqref{Q0sde}, \eqref{def:Q0} and \eqref{barT2j2} give
\begin{align}
\label{barT2j3}
\nonumber\bar Y^\vep_t&=b\int_0^t1(\bar T^\vep_{2j}+\vep<s)\,ds\\
\nonumber&\quad+\int_0^t1(\bar T^\vep_{2j}+\vep<s)(\bar Y^\vep_s)^{p'}\sqrt{K(k)}\,dB(s),\ \bar Y^\vep\ge 0,\\
\bar D^\vep_t&=2b\bigl((t-\bar T_{2j}^\vep)\wedge\vep\bigr)\\
\nonumber&\quad+\int_0^t1(\bar T^\vep_{2j}+\vep<s)\bigl((\bar Y^\vep_s+\bar D^\vep_s)^{p'}-(\bar Y^\vep_s)^{p'}\bigr)\sqrt{K(k)}\,dB_s,\ \bar D^\vep\ge 0,\\
\nonumber \bar X^\vep_t&=2b\bigl((t-\bar T^\vep_{2j})\wedge\vep\bigr)+b\int_0^t1(\bar T^\vep_{2j}+\vep<s)\,ds\\
\nonumber&\phantom{=2b\bigl((t-\bar T^\vep_{2j})\wedge\vep\bigr)}+\int_0^t1(\bar T^\vep_{2j}+\vep<s)(\bar X^\vep_s)^{p'}\sqrt{K(k)}\,dB(s).
\end{align}

Now assume $\bar T^\vep_{2j+1}<\infty$ and construct $(\bar X^\vep_t,\bar Y^\vep_t)$ and $\bar D^\vep_t=|\bar X^\vep_t-\bar Y^\vep_t|$ for $t\in[\bar T^\vep_{2j+1},\bar  T^\vep_{2j+2}]$ as above but with the roles of $\bar X$ and $\bar Y$ reversed. This means that on $\{\bar T^\vep_{2j+1}<V_k^\vep\}$,
\begin{align}\label{barT2j4} P(&(\bar Y_{2j+1},\bar X_{2j+1},\bar T^\vep_{2j+2}-\bar T^\vep_{2j+1})\in\cdot|\cF_{\bar T^\vep_{2j+1}}\vee\sigma(X^\vep,Y^\vep,W))\\
\nonumber&\qquad\qquad=Q_{\langle Y^\vep_{(T^\vep_{2j+1}+\cdot)\wedge V_k^\vep},1\rangle,\langle X^\vep_{(T^\vep_{2j+1}+\cdot)\wedge V_k^\vep},1\rangle}(\cdot),
\end{align}
and on $\{\infty>\bar T^\vep_{2j+1}\ge V_k^\vep\}$, the above conditional probability is again $Q_0$.  The apparent lack of symmetry in the definitions arises because we have also reversed the roles of $X^\vep$ and $Y^\vep$ on $[\bar T^\vep_{2j+1},\bar  T^\vep_{2j+2}]$. 
The above definition implies that $\bar D^\vep_t=\bar Y^\vep_t-\bar X^\vep_t\ge 0$ on $[\bar T^\vep_{2j+1},\bar  T^\vep_{2j+2}]$, $\bar T^\vep_{2j+2}=\inf\{t>\bar T^\vep_{2j+1}:\bar Y^\vep_t=0\}$, and $\bar X^\vep(\bar T^\vep_{2j+2})=\bar Y^\vep(\bar T^\vep_{2j+2})=0$ on $\{\bar T^\vep_{2j+2}<\infty\}$. 

It follows from \eqref{barYdef}, \eqref{barXdef} (now with $t\in [\bar T^\vep_{2j},\bar T^\vep_{2j+1}]$) and \eqref{TbarT} that on $\{V^\vep_k>\bar T^\vep_{2j+1}\}$ we have $\bar T^\vep_{2j+1}=T^\vep_{2j+1}$. Symmetric reasoning shows that on $\{V^\vep_k>\bar T^\vep_{2j+2}\}$, $\bar T^\vep_{2j+2}= T^\vep_{2j+2}$.  We have verified \eqref{barX0} and \eqref{TbarT} for $j+1$. Since $\bar T^\vep_{j+1}-\bar T^\vep_j\ge \vep$ (by \eqref{barYdef},\eqref{barXdef} and \eqref{barT2j3}), $\bar T^\vep_j\uparrow \infty$ and our inductive definition is complete.

The reasoning above to show $\bar T^\vep_{2j+1}= T^\vep_{2j+1}$ on $\{V_k^\vep>\bar T^\vep_{2j+1}\}$ and the obvious induction also shows that 
\begin{align}\label{barXeqX} \bar X^\vep_{t\wedge V^\vep_k}&=\langle X^\vep_{t\wedge V_k^\vep},1\rangle, \ \bar Y^\vep_{t\wedge V^\vep_k}=\langle Y^\vep_{t\wedge V_k^\vep},1\rangle,\\
\nonumber \bar D^\vep_{t\wedge V_k^\vep}&=|\langle X^\vep_{t\wedge V_k^\vep},1\rangle-\langle Y^\vep_{t\wedge V_k^\vep},1\rangle|\quad\forall t\ge 0\hbox{ a.s.}
\end{align}

The following consequence of the above construction will be important for us:
\begin{align}\label{barexc1}
\nonumber P\bigl((&\bar X_{2j},\bar Y_{2j},\bar T^\vep_{2j+1}-\bar T^\vep_{2j})\in\cdot\bigl|\cF_{\bar T^\vep_{2j}}\bigr)\\
&\qquad\qquad=Q_1(\cdot)\hbox{ a.s. on }\{\bar T^\vep_{2j}<V_k^\vep\},\\
\nonumber P\bigl((&\bar Y_{2j+1},\bar X_{2j+1},\bar T^\vep_{2j+2}-\bar T_{2j+1})\in\cdot\bigl|\cF_{\bar T^\vep_{2j+1}}\bigr)\\
\nonumber&\qquad\qquad=Q_1(\cdot)\hbox{ a.s. on }\{\bar T^\vep_{2j+1}<V_k^\vep\},
\end{align}
and
\begin{align}\label{barexc2}
\nonumber P\bigl((&\bar X_{2j},\bar Y_{2j}, \bar T^\vep_{2j+1}-\bar T^\vep_{2j})\in\cdot\bigl|\cF_{\bar T^\vep_{2j}}\bigr)\\
&\qquad\qquad=Q_0(\cdot)\hbox{ a.s. on }\{V^\vep_k\le \bar T^\vep_{2j}\},\\
\nonumber P\bigl((&\bar Y_{2j+1},\bar X_{2j+1}, \bar T^\vep_{2j+2}-\bar T^\vep_{2j+1})\in\cdot\bigl|\cF_{\bar T^\vep_{2j+1}}\bigr)\\
\nonumber&\qquad\qquad=Q_0(\cdot)\hbox{ a.s. on }\{V^\vep_k\le \bar T^\vep_{2j+1}\}.
\end{align}

Consider, for example, the first equality in \eqref{barexc1}.  By \eqref{barT2j1}
 we have for a Borel set $B$ and $A\in\cF_{\bar T^\vep_{2j}}$, $A\subset\{V_k^\vep>\bar T^\vep_{2j}\}$,
 \begin{align}
 \nonumber P\bigl(&\{(\bar X_{2j},\bar Y_{2j},\bar T^\vep_{2j+1}-\bar T^\vep_{2j})\in B\}\cap A\bigr)\\
\nonumber &=E\Bigl(Q_{\langle X^\vep_{(T^\vep_{2j}+\cdot)\wedge V^\vep_k},1\rangle,\langle Y^\vep_{(T^\vep_{2j}+\cdot)\wedge V^\vep_k},1\rangle}(B)1_A\Bigr)\\
 \label{Exc1}&=E\Bigl(E\Bigl(Q_{\langle X^\vep_{(T^\vep_{2j}+\cdot)\wedge V^\vep_k},1\rangle,\langle Y^\vep_{(T^\vep_{2j}+\cdot)\wedge V^\vep_k},1\rangle}(B)\Bigl|\cF_{T^\vep_{2j}\wedge V^\vep_k}\Bigr)1_A\Bigr).
 \end{align}
In the last line we used the fact that $V^\vep_k>\bar T^\vep_{2j}=T^\vep_{2j}$ 
on $A$ to see that $A\in\cF_{T^\vep_{2j}\wedge V^\vep_k}$.  Formula \eqref{barXeqX} shows that our construction of $(\bar X^\vep,\bar Y^\vep)$ 
has not increased the information in $\cF_{T^\vep_{2j}\wedge V^\vep_k}$
so we may use \eqref{exceven}. 
Applying \eqref{exceven} and the 
fact that $V_k^\vep=T^\vep_{2j}+V_k^\vep\circ\theta_{T^\vep_{2j}}$ on 
$\{V_k^\vep>T_{2j}^\vep\}$, where $(\theta_t)$ are the shift operators for 
$(X^\vep,Y^\vep)$, we conclude from \eqref{Exc1} that the far left-hand side 
of \eqref{Exc1} equals
\[E\Bigl(Q_{\langle X^\vep_{\cdot\wedge V^\vep_k},1\rangle,\langle Y^\vep_{\cdot\wedge V_k^\vep},1\rangle}(B)\Bigr)1_A)=E\Bigl(Q_1(B)1_A\Bigr),\]
by \eqref{Q1defn}. This gives the first equality in \eqref{barexc1} and the second inequality holds by a symmetric argument.  The proof of \eqref{barexc2} is easier.

Our next goal is to show there is positive probability, independent of $\vep$, 
of $\bar D^\vep$ hitting some appropriately chosen $x_0\in(0,1)$ before 
$\bar X^\vep$ or $\bar Y^\vep$ hits $1$.  By \eqref{barexc1} and 
\eqref{barexc2} the excursions of $\bar X^\vep\vee\bar Y^\vep$ away 
from
 $0$ 
are governed by $Q_0$ or $Q_1$, depending on whether or not $V_k^\vep$ has 
occurred. Therefore we need to analyze these two laws.  

Consider the more complex $Q_1$ first. Use \eqref{XMP}, with $\phi=1$, in 
\eqref{barXdef} and the fact that $V_k^\vep>\vep$ (by \eqref{Vkrest}) to 
conclude that under $Q_1$, $\bar X^\vep_t=2b(t\wedge \vep)$ for $t\le \vep$ 
and for $0\le t\le \bar T^\vep_1-\vep$,
\begin{align}
\nonumber\bar X^\vep_{t+\vep}&=\vep b+((t+\vep)\wedge V_k^\vep)b+b\int_0^{t+\vep}
1(s>V_k^\vep)\,ds\\
\nonumber&\phantom{=\vep b}+\int_\vep^{t+\vep}\int 1(s\le V_k^\vep) X^\vep(s,x)^p\,dW(s,x)\\
\nonumber&\phantom{=\vep b}+\int_\vep^{t+\vep}1(s>V_k^\vep)(\bar X^\vep_s)^{p'}\sqrt{K(k)}\,dB_s\\
\label{barXMP}&=2\vep b+tb+N_t,
\end{align}
where $N$ is a continuous $(\cF_{t+\vep})-$local martingale such that 
\begin{align*}
\langle N\rangle_t&=\int_\vep^{t+\vep}\Bigl[1(s\le V^\vep_k)\int X^\vep(s,x)^{2p}dx+1(s>V_k^\vep)(\bar X^\vep_s)^{2p'}K(k)\Bigr]\,ds\\
&\equiv \int_\vep^{t+\vep}\langle N\rangle'(s)\,ds.
\end{align*}
By the definition of $V^\vep_k$ we may apply Theorem~\ref{RAL} with $(\alpha,\beta,C)=(2p,1/4,k)$ and conclude that 
\begin{align}\label{sqfn'}
\langle N\rangle'(s)&\ge 1(s\le V^\vep_k)K(k)\Bigl[\int X^\vep(s,x)\,dx\Bigr]^{((p/2)+1)/(5/4)}\\
\nonumber&\phantom{\ge A}+1(s>V_k^\vep)(\bar X^\vep_s)^{2p'}K(k)\\
\nonumber&=K(k)(\bar X^\vep_s)^{2p'},
\end{align}
where \eqref{barXeqX} is used in the last line.

Define a random time change $\tau_t$ by
\begin{equation}\label{taudef}
t=\int_0^{\tau_t}\frac{\langle N\rangle'(s+\vep)}{K(k)(\bar X^\vep_{s+\vep})^{2p'}}\,ds\equiv A(\tau_t),\ \ t<A(\bar T_1^\vep-\vep).
\end{equation}
The restriction on $t$ ensures we are not dividing by zero in the above integrand because $\bar T^\vep_1$ is the hitting time of $0$ by $\bar X^\vep$.  Clearly \eqref{sqfn'} implies 
\begin{equation}\label{tau'bnd}
\tau'(t)\le 1\hbox{ for }t<A(\bar T^\vep_1-\vep).
\end{equation}
For $t<A(\bar T_1^\vep-\vep)$, let
\begin{equation}\label{Xhatdef}
\hat X(t)=\bar X^\vep(\tau_t+\vep)=2b\vep+b\tau(t)+\hat N(t),
\end{equation}
where $\hat N_t=N(\tau_t)$ is continuous $(\cF_{\tau_t+\vep})$-local martingale such that
\begin{equation}\nonumber\langle\hat N\rangle_t=\int_\vep^{\tau_t+\vep}\langle N\rangle'(s)\,ds=\int_0^{\tau_t}\langle N\rangle'_{s+\vep}\,ds=K(k)\int_0^t \hat X(r)^{2p'}\,dr.\end{equation}
This follows by using the substitution $s=\tau_r$ and calculating the differential $d\tau(r)$ from \eqref{taudef}. Note also that if $\hat T_x=\inf\{t\ge 0:\hat X(t)=x\}$, then $A(\bar T_1^\vep-\vep)=\hat T_0$. 
Therefore by \eqref{Xhatdef} we may assume there is a Brownian motion $\hat B$ so that
\begin{equation}\label{hatXsde}
\hat X(t\wedge\hat T_0)=2b\vep+b\tau(t\wedge \hat T_0)+\int_0^{t\wedge \hat T_0}\sqrt{K(k)}\hat X(s)^{p'}d\hat B(s).
\end{equation}
The scale function for a  diffusion defined by a similar formula, but with $t\land \hat T_0$ in place of $\tau(t\land \hat T_0)$,
is 
\[s_k(x)=\int_0^x\exp\Bigl\{-\frac{2by^{1-2p'}}{K(k)(1-2p')}\Bigr\}\,dy.\]
That is, $s_k$ satisfies 
\begin{equation}\label{seq}\frac{K(k)x^{2p'}}{2}s_k''(x)+bs_k'(x)=0\hbox{ on }[0,\infty),\ s_k(0)=0.
\end{equation}
By It\^o's Lemma
\begin{align*}
s_k(\hat X(t\wedge \hat T_0))=s_k(2b\vep)&+\int_0^{t\wedge\hat T_0}b\tau'(u)s_k'(\hat X(u))+\frac{K(k)\hat X(u)^{
2p'
}}{2}s_k''(\hat X(u))\,du\\
&+\int_0^{t\wedge\hat T_0}s_k'(\hat X(u))d\hat N(u).
\end{align*}
\eqref{tau'bnd} and \eqref{seq} show that the integrand in the drift term above is non-positive, and so $s_k(\hat X(t\wedge \hat T_0\wedge \hat T_1))$ is a supermartingale which therefore satisfies
\[E\bigl(s_k(\hat X(t\wedge \hat T _0\wedge \hat T_1)\bigr)\le s_k(2b\vep).\]
This implies that
\begin{align}\nonumber Q_1(\bar X^\vep_t
=1 \hbox{  for some }
t<\bar T_1^\vep)&=Q_1(s_k(\hat X(\cdot\wedge \hat T_1\wedge\hat T_0))\hbox{ hits }s_k(1)\hbox{ before }0)\\
\nonumber&
=
\lim_{t\to\infty}Q_1\Bigl(s_k(\hat X (t\wedge \hat  T_0\wedge\hat T_1))/s_k(1)\Bigr)\\
\label{Q1hit}&\le s_k(2b\vep)/s_k(1).
\end{align}

Under $Q_0$ add the equations in \eqref{Q0sde} to see that (we write 
$\bar X^\vep$ for $\tilde X^\vep$),
\[\bar X^\vep_{t+\vep}=2b\vep+bt+\int_\vep^{t+\vep}(\bar X^\vep_s)^{p'}\sqrt{K(k)}\,dB_s,\ t+\vep\le \bar T^\vep_1=\inf\{t:\bar X^\vep_t=0\}.\]
This is equation \eqref{hatXsde} with $t$ in place of $\tau_t$ and so the previous calculation applies to again give us \eqref{Q1hit} with $Q_0$ in place of $Q_1$.  

Under either $Q_i$, $\bar X^\vep_t=\bar X^\vep_t\vee \bar Y^\vep_t$ and so we conclude
\begin{equation}\label{Qimaxhit}
Q_i(\bar X^\vep_t\vee\bar Y^\vep_t\hbox{ hits $1$ for }t<\bar T^\vep_1)\le s_k(2b\vep)/s_k(1),\ i=1,2.
\end{equation}

We next consider the escape probability for $\bar D^\vep$ under $Q_1$.  Let $x_0\in(2b\vep,1)$ and 
\[T_{\bar D}(0,x_0)=\inf\{t:\bar D^\vep_t=0\hbox{ or }x_0\}\le \bar T^\vep_1\ \ Q_1-a.s.,\]
the last since $\bar D^\vep_t=\bar X^\vep_t-\bar Y^\vep_t\le \bar X^\vep_t$ for $t\le \bar T^\vep_1$ under $Q_1$.  It follows from \eqref{UTdef1}, \eqref{YDdef} and \eqref{barDdef} that $\bar D_t^\vep=2b(t\wedge\vep)$ for $t\le \vep$, and for $t+\vep\le\bar T_1^\vep$ we have,
\begin{align}\label{barDsde}
\bar D^\vep_{t+\vep}=2b\vep&+\int_\vep^{t+\vep}\int(X^\vep(s,x)^p-Y^\vep(s,x)^p)1(s\le V_k^\vep)\,dW(s,x)\\
\nonumber &+\int_\vep^{t+\vep}1(s>V_k^\vep)((\bar X^\vep_s)^{p'}-(\bar Y^\vep_s)^{p'})\sqrt{K(k)}\,dB_s,
\end{align}
which is a non-negative local martingale in $t$. We have 
\begin{align}\label{Q1Dhit}
\nonumber Q_1&
(\exists t<\bar T^\vep_1: \bar D_t^\vep\ge x_0)
\\
\nonumber&\ge E\Bigl(\bar D^\vep(T_{\bar D}(0,x_0))x_0^{-1}1(T_{\bar D}(0,x_0)<\infty, \bar T_1^\vep<\infty)\Bigr)\\
&=E\Bigl(\bar D^\vep(T_{\bar D}(0,x_0)\wedge \bar T^\vep_1)x_0^{-1}\Bigr)-E\Bigl(\bar D^\vep(T_{\bar D}(0,x_0))x_0^{-1}1(\bar T_1^\vep=\infty)\Bigr).
\end{align}
The first term on the right-hand side is the terminal element of a bounded martingale and so
\begin{equation}\label{term1}
E\Bigl(\bar D^\vep(T_{\bar D}(0,x_0)\wedge \bar T^\vep_1)x_0^{-1}\Bigr)=2b\vep/x_0.
\end{equation}

It follows from \eqref{barXMP} that on $\{\bar T_1^\vep=\infty\}$,
\begin{equation}\label{DS}\{\limsup_{t\to\infty}\bar X^\vep_t<\infty\}\subset\{\lim_{t\to\infty} N_t=-\infty\},
\end{equation}
which is a $Q_1$-null set by 
the Dubins-Schwarz 
theorem which
asserts that a continuous martingale is a time-changed Brownian motion.  
Therefore
\begin{equation}\label{barXhitting}
\bar X^\vep_{t+\vep}
\hbox{ hits $0$ or $1$ for some } t\le \bar T_1^\vep-\vep, t<\infty,
\ Q_1-a.s.,
\end{equation}
and therefore
\begin{align}\label{term2}
E\Bigl(\bar D^\vep(T_{\bar D}(0,x_0))x_0^{-1}1(\bar T_1^\vep=\infty)\Bigr)&\le Q_1(\bar T_1^\vep=\infty)\\
\nonumber&\le Q_1(\bar X_t^\vep\hbox{ hits }1\hbox{ for }t<\bar T^\vep_1)\\
&\le s_k(2b\vep)/s_k(1),
\end{align}
the last by \eqref{Q1hit}.

Since $\lim_{x\to 0+}s_k(x)/x=1$, there is an $\vep_0(k)>0$ and $x_0=x_0(k)\in(0,1)$, such that 
\begin{equation}\label{epsx0}\vep\le \vep_0\hbox{ implies }s_k(2b\vep)<3b\vep\hbox{ and }2b\vep<x_0\le s_k(1)/6.
\end{equation}
So for $\vep\le\vep_0$ and $x_0$ as above we may use \eqref{term1} and \eqref{term2} in \eqref{Q1Dhit} and conclude
\[Q_1
(\exists t\in[\vep,\bar T_1^\vep) : \bar D^\vep_t\ge x_0)
\ge(2b\vep/x_0)-(s_k(2b\vep)/s_k(1))> (b\vep)/x_0.\]
Virtually the same proof (it is actually simpler) works for $Q_0$.  Under $Q_i$, $\bar D^\vep_t=|\bar X^\vep_t-\bar Y^\vep_t|$ for $t\le \bar T^\vep_1=\inf\{t:\bar X^\vep_t\vee \bar Y^\vep_t=1\}$ and so we have proved for $x_0$ as above,
\begin{equation} \label{Qisep} Q_i
(\exists t\in[\vep,\bar T^\vep_1] : |\bar X^\vep_t-\bar Y^\vep_t|\ge x_0)
\ge\frac{b\vep}{x_0}\ \hbox{for }i=1,2\hbox{ and }0<\vep\le \vep_0,
\end{equation}
and (see \eqref{barXhitting}
for $i=1$)
\begin{equation}\label{barmaxhitting}
\bar X^\vep_t\vee\bar Y^\vep_t\hbox{ hits $0$ or $1$ for }t\le \bar T^\vep_1,\ t\hbox{ finite } Q_i-a.s.,\ i=1,2.
\end{equation}

Let 
\[\NN_1=\min\{j:(\bar X^\vep\vee\bar Y^\vep)(t+\bar T^\vep_j)\hbox{ hits $1$ for }t<\bar T_{j+1}^\vep-\bar T_j^\vep\},\]
and 
\[\NN_2=\min\{j:|\bar X^\vep-\bar Y^\vep|(t+\bar T^\vep_j)\hbox{ hits $x_0$ for }t<\bar T_{j+1}^\vep-\bar T_j^\vep\}.\]
Use \eqref{barexc1}, \eqref{barexc2} and \eqref{Qimaxhit} to see that
\begin{align*}
P(&\NN_1>n) \\
&=E\Bigl(1(\NN_1>n-1)P\bigl(\bar X^\vep\vee\bar Y^\vep((\bar T^\vep_{n-1}+\cdot)\wedge\bar T^\vep_n)\hbox{ doesn't hit }1\bigl|\cF_{\bar T^\vep_{n-1}}\Bigr)\Bigr)\\
&\ge P(\NN_1>n-1)\Bigl(1-\frac{s_k(2b\vep)}{s_k(1)}\Bigr).
\end{align*}
Therefore, if $p_1=\frac{s_k(2b\vep)}{s_k(1)}$, then
\begin{equation}\label{N1bnd} P(\NN_1>n)\ge (1-p_1)^{n+1}.
\end{equation}
Similar reasoning using \eqref{Qisep} in place of \eqref{Qimaxhit} shows that if $p_2=\frac{b\vep}{x_0}$, then for $\vep\le\vep_0$, 
\begin{equation}\label{N2bnd}
P(\NN_2>n)\le (1-p_2)^{n+1}.
\end{equation}
Note that \eqref{epsx0} shows that
\begin{equation}\label{pis}
\frac{p_2}{p_1}=\frac{b\vep}{s_k(2b\vep)}\frac{s_k(1)}{x_0}\ge\frac{1}{3}\frac{s_k(1)}{x_0}\ge 2.
\end{equation}
If $n=\lceil p_1^{-1}\rceil$ we get for $\vep\le\vep_0$
\begin{align*}
P(\NN_2<\NN_1)\ge P(\NN_1>n)-P(\NN_2>n)&\ge (1-p_1)^{n+1}-(1-p_2)^{n+1}\\
&\ge(1-p_1)^{n+1}-(1-2p_1)^{n+1}\\
&\ge\frac{1}{2}(e^{-1}-e^{-2}),
\end{align*}
where the last inequality holds by decreasing $\vep_0(k)$, if necessary.
If
\[\bar\bt_\vep=\inf\{t:\bar X^\vep_t\vee\bar Y^\vep_t\ge 1\},
\]
then the above bound implies that for $\vep\le \vep_0$,
\begin{equation}\label{sepprob1}
P(\sup_{t\le \bar \bt_\vep}|\bar X^\vep_t-\bar Y^\vep_t|\ge x_0)\ge \frac{1}{2}(e^{-1}-e^{-2}).
\end{equation}

Now let
\[\bt_\vep=\inf\{t:\langle X^\vep_t,1\rangle\vee\langle Y^\vep_t,1\rangle\ge 1\}.\]
Then \eqref{barXeqX} shows that
\[\hbox{if }\bt_\vep<V_k^\vep,\hbox{ then }\bar \bt_\vep=\bt_\vep\hbox{ and }(\bar X^\vep_t,\bar Y^\vep_t)=(\langle X^\vep_t,1\rangle,\langle Y^\vep_t,1\rangle)\hbox{ for all }t\le \bt_\vep,\]
and so by \eqref{sepprob1} for $\vep\le \vep_0$, 
\begin{equation}\label{sepprob2}
P(\sup_{t\le \bt_\vep}|\langle X^\vep_t,1\rangle-\langle Y^\vep_t,1\rangle|\ge x_0)\ge \frac{1}{2}(e^{-1}-e^{-2})-P(V^\vep_k\le \bt_\vep).
\end{equation}

Now recall we have $\vep_n\downarrow 0$ so that 
$(X^{\vep_n},Y^{\vep_n},W)\to(X,Y,W)$ weakly on 
$C(\R_+,(C^+_{rap})^2\times C_{tem})$, where $X$ and $Y$ are $C^+_{rap}$-valued 
solutions of \eqref{spde2}. Arguing as in \eqref{DS} and using Dubins-Schwarz, 
we see that
\begin{equation}\label{bigmass}\limsup_{t\to\infty}\langle X_t,1\rangle=\limsup_{t\to\infty}\langle Y_t,1\rangle=\infty\ \ a.s.
\end{equation}
Standard weak convergence arguments now show that $\{\bt_{\vep_n}\}$ are stochastically bounded.
Lemma~\ref{lem:unifHolder} therefore shows that we may choose a fixed $k$ sufficiently large so that
\[P(V_k^{\vep_n}\le \bt_{\vep_n})\le \frac{1}{4}(e^{-1}-e^{-2})\hbox{ for all }n.\]
Using this fixed $k$ throughout we see from \eqref{sepprob2} that for large enough $n$
\[P\Bigl(\sup_{t\le \bt_{\vep_n}}\bigl|\langle X^{\vep_n}_t,1\rangle-\langle Y^{\vep_n}_t,1\rangle\bigr|\ge x_0\Bigr)\ge \frac{1}{4}(e^{-1}-e^{-2}).\]
If $\bt'=\inf\{t:\langle 
X_t,1\rangle \lor \langle Y_t,1\rangle\ge 2
\}<\infty$ 
a.s.,
by \eqref{bigmass}, then the above implies 
\[P\Bigl(\sup_{t\le \bt'}\bigl|\langle X_t,1\rangle-\langle Y_t,1\rangle\bigr|\ge x_0/2\Bigr)\ge \frac{1}{4}(e^{-1}-e^{-2}),\]
and so $P(X\neq Y)\ge \frac{1}{4}(e^{-1}-e^{-2})$. 
\qed

\begin{lemma}\label{lem:unifHolder} For any $M\in\N$, $\lim_{k\to\infty}\sup_{0<\vep\le 1}P(V^\vep_k\le M)=0$.
\end{lemma}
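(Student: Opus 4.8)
The plan is to show that the spatial $C^{1/4}$ modulus of $(X^\vep,Y^\vep)$ on a strip $[0,M+1]\times\R$ has $q$-th moments bounded uniformly in $\vep\in(0,1]$ for $q$ large, and then to apply Chebyshev's inequality. Set
\[
\Theta_\vep=\sup\Bigl\{\frac{|X^\vep(s,x)-X^\vep(s,x')|+|Y^\vep(s,x)-Y^\vep(s,x')|}{|x-x'|^{1/4}}:\ 0\le s\le M+1,\ x\ne x'\Bigr\}.
\]
Since $X^\vep,Y^\vep$ are jointly continuous in $(s,x)$, $\{V^\vep_k\le M\}\subseteq\{V^\vep_k<M+1\}\subseteq\{\Theta_\vep\ge k\}$, so it suffices to prove $\sup_{0<\vep\le1}E(\Theta_\vep^q)=:C(q,M)<\infty$; then $\sup_{0<\vep\le1}P(V^\vep_k\le M)\le C(q,M)k^{-q}\to0$.

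First I would pass to mild form: from \eqref{XMP} and the definition of $A_1^\vep$, $X^\vep$ (and symmetrically $Y^\vep$ by \eqref{YMP}) solves
\[
X^\vep(t,x)=\int_0^t\!\!\int p_{t-s}(x-y)g^\vep(s,y)\,dy\,ds+\int_0^t\!\!\int p_{t-s}(x-y)1_{J_\vep}(s)X^\vep(s,y)^p\,W(ds,dy),
\]
where $g^\vep(s,y)=\bigl(1+\sum_{j\ge0}(-1)^j1_{[T^\vep_j,U^\vep_j]}(s)\bigr)\psi(y)$ satisfies $0\le g^\vep\le2\psi$. The deterministic first term is, uniformly in $\vep$ and in $t\le M+1$, bounded by $2(M+1)\Vert\psi\Vert_\infty$, Lipschitz in $x$ with constant $2\Vert\psi'\Vert_\infty$ (because $\psi\in C_c^1$), and rapidly decreasing in $x$; hence it contributes only a bounded deterministic amount to $\Theta_\vep$. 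Everything therefore reduces to the stochastic convolution $Z^\vep(t,x)=\int_0^t\!\int p_{t-s}(x-y)1_{J_\vep}(s)X^\vep(s,y)^p\,W(ds,dy)$ and its analogue for $Y^\vep$.

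Next I would establish, with all constants independent of $\vep$, the two ingredients for a Garsia--Rodemich--Rumsey argument on $Z^\vep$ over $[0,M+1]\times\R$ in the parabolic metric $\rho((t,x),(t',x'))=|t-t'|^{1/2}+|x-x'|$. \emph{(i) Moment bounds:} $\sup_{0<\vep\le1}\sup_{s\le M+1}E(|X^\vep_s|_\lambda^q)<\infty$ for every $q\ge1$, $\lambda>0$. This follows from the mild equation by the weighted-norm Gronwall estimate of Shiga (Section~6 of \cite{Shi94}, proofs of Theorems~2.2 and~2.5); the key point is that the noise coefficient obeys $(X^\vep)^{2p}\le1+X^\vep$ (as $2p<1$) and the drift density is dominated by $2\psi$, so the resulting bound depends only on $M,q,\lambda,\psi$ and not on $\vep$ or on the excursion structure. \emph{(ii) Increment bounds:} for $q$ large, by Burkholder--Davis--Gundy, Jensen's inequality, and (i) (to control $E(X^\vep(s,y)^{pq})$),
\[
E\bigl(|Z^\vep(t,x)-Z^\vep(t',x')|^q\bigr)\le C(q,M)\,\rho\bigl((t,x),(t',x')\bigr)^{q/2}\,e^{-\lambda q(|x|\wedge|x'|)}\quad\text{for }|x-x'|\le1,
\]
the power of $\rho$ coming from the standard heat-kernel bounds $\int_0^t\!\int(p_{t-s}(x-y)-p_{t-s}(x'-y))^2\,dy\,ds\le C|x-x'|$ and the analogous time-increment bound, and the exponential weight from the rapid decrease of $X^\vep(s,\cdot)$ near $x$.

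Finally I would feed (ii) into the GRR lemma applied separately on each dyadic strip $[0,M+1]\times[n,n+2]$, $n\in\mathbb{Z}$, obtaining a random constant $\Xi^\vep_n$ dominating the spatial $C^{1/4}$ seminorm of $Z^\vep$ on that strip with $\sup_{0<\vep\le1}E((\Xi^\vep_n)^q)\le C(q,M)e^{-\lambda q|n|}$ (for $q$ large the GRR Hölder exponent exceeds $\tfrac14$). Summing over $n$, and separately bounding the contribution of pairs with $|x-x'|\ge1$ by $2\sup_{s\le M+1}\Vert X^\vep_s\Vert_\infty\le2\sup_{s\le M+1}|X^\vep_s|_\lambda$ (controlled by (i)), gives $\sup_{0<\vep\le1}E(\Theta_\vep^q)<\infty$ for $q$ large, as required. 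The main obstacle is part (i): one must verify that Shiga's moment estimates are genuinely uniform in $\vep$ in spite of the $\vep$-dependent excursion construction --- which works precisely because, across all the pieces, $X^\vep$ satisfies the single mild equation displayed above with coefficients bounded independently of $\vep$. The rapid-decrease bookkeeping in (ii) and the dyadic summation in the last step are routine but somewhat tedious.
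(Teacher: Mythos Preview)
Your argument is correct. The paper's proof follows the same overall plan---separate off the smooth drift, reduce to the stochastic convolution, and run a Kolmogorov-type continuity argument uniform in $\vep$---but organizes the last step differently. Rather than $L^q$ increment bounds and Garsia--Rodemich--Rumsey on dyadic strips, the paper first invokes the already-established tightness of $\{X^{\vep_n}\}$ in $C(\R_+,C^+_{rap})$ to truncate the noise coefficient to a deterministic envelope $Ke^{-|x|}$; this yields Gaussian (rather than polynomial) tail bounds on the increments of the stochastic convolution, after which a direct Borel--Cantelli chaining on parabolic dyadic grids gives the uniform modulus. Your route through moment bounds and GRR is the more systematic packaging; the paper's localization-then-Gaussian-tails route is slightly more hands-on but avoids tracking the $L^q$ norm of $|X^\vep_s|_\lambda$ through Shiga's Gronwall argument. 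Either way the essential inputs are the same: Shiga's uniform-in-$\vep$ a priori bounds on $X^\vep$ and the standard heat-kernel increment estimates, so the two proofs are really variants of one another.
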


\begin{proof}
The proof depends on a standard argument in the spirit of Kolmogorov's 
continuity lemma, so we will omit some details.  

Fix the time interval $[0,M]$.  Define
\begin{align*}V_k^\vep(X)=\inf\{s\ge 0:\ &\exists x,x'\in\R\hbox{ such that }\\
&|X^\vep(s,x)-X^\vep(s,x')|>k|x-x'|^{1/4}\}.\end{align*}
and $V_k^\vep(Y)$ likewise.  It suffices to prove Lemma \ref{lem:unifHolder} 
for $V_k^\vep$ replaced by $V_k^\vep(X)$ and $V_k^\vep(Y)$ and so clearly we only
need consider $V_k^\vep(X)$.  Recall that $\{X^{\vep_n}\}$ is tight in $C(\R_+,C_{rap}^+)$.  
So it suffices to choose a constant 
$K>0$ and prove the lemma for $X^\vep(t,x)\wedge (Ke^{-|x|})^{1/p}$ in place of $X^\vep$.
Considering the integral equation for $X^\vep$, and using the fact that 
$\psi\in C^1_c(\R)$
we see that it is enough to prove Lemma \ref{lem:unifHolder} with $X^\vep$ 
replaced by the stochastic convolution
\[
N^\vep(t,x) = \int_{0}^{t}p_{t-s}(x-y)\varphi^\vep(s,y)dW(s,y).
\]
Here one can use Lemma 6.2 of \cite{Shi94} to handle the drift terms. 
The term $\varphi^\vep(s,y)$ is a predictable random field satisfying 
\[
|\varphi^\vep(t,x)| \leq Ke^{-|x|}
\]
for all $t\in[0,M]$, $x\in\R$ almost surely.  Since our estimates are uniform in 
$\vep$, we will omit the superscript on 
$\varphi$ and $N$
from now on.  The constants below
may depend on $M$ and $K$.

Now we rely on some standard estimates which are easy to verify.  
We claim that there exist constants $q_0,K_0$ such that for 
$0\leq t\leq t+\delta\leq M$ and $x\in\mathbf{R}$, and for $\delta<1$,
\begin{eqnarray}
\label{Gbnds}\int_{0}^{\delta}\int_{\mathbf{R}}p_s^2(x-y)e^{-2p|y|}dyds 
  &\leq& K_0\delta^{\frac{1}{2}} e^{-q_0|x|} ,
   \\
\nonumber \int_{0}^{t}\int_{\mathbf{R}}[p_{t-s+\delta}(x-y)-p_{t-s}(x-y)]^2e^{-2p|y|}dyds 
  &\leq& K_0\delta^{\frac{1}{2}} e^{-q_0|x|}  ,
  \\
\nonumber\int_{0}^{t}\int_{\mathbf{R}}[p_{t-s}(x-y+\delta)-p_{t-s}(x-y)]^2
e^{-2|y|} 
dyds 
  &\leq& K_0\delta e^{-q_0|x|}  .
\end{eqnarray}

From these inequalities, it follows in a standard way that for some positive constants 
$q_1,C_0,C_1$, we have 
\begin{eqnarray}
\label{prob-ests}
P\left(|N(t+\delta,x)-N(t,x)|\geq\lambda\right) 
  &\leq& C_0\exp\left(-\frac{C_1\lambda^2}
       {\delta^{\frac{1}{2}}}\right)e^{-q_1|x|} 
,
  \\
P\left(|N(t,x)-N(t,x+\delta)|\geq\lambda\right) 
  &\leq& C_0
\exp\left(-\frac{C_1\lambda^2} 
        {\delta}\right)e^{-q_1|x|}.  \nonumber
\end{eqnarray}
For example, if we write
\[
\hat M_r= \int_{0}^{r}\int_{\mathbf{R}}[p_{t-s}(x-y+\delta)-p_{t-s}(x-y)]
      \varphi(s,y)W(dy,ds)
\]
then $\hat M_r$ is a continuous martingale and hence a time changed Brownian 
motion, with time scale
\begin{eqnarray*}
E(r)
&=& \int_{0}^{r}\int_{\mathbf{R}}[p_{t-s}(x-y+\delta)-p_{t-s}(x-y)]^2
      \varphi^2(s,y)dyds    \\
&\leq& \int_{0}^{r}\int_{\mathbf{R}}[p_{t-s}(x-y+\delta)-p_{t-s}(x-y)]^2
    K^2 
e^{-2|y|} 
dyds  .
\end{eqnarray*}
Thus, 
\begin{eqnarray*}
P\left(|N(t,x)-N(t,x+\delta)|\geq\lambda\right) &=& P(|\hat M_t|\geq\lambda) \\
&\leq& P\left(\sup_{0\leq s\leq E(t)}|B_s|\geq\lambda\right) 
\end{eqnarray*}
and then the reflection principle for Brownian motion  and the third inequality in \eqref{Gbnds} (to bound $E(t)$ for $t\le M$) gives the second 
inequality in (\ref{prob-ests}).

Now we outline a standard chaining argument, and for simplicity assume that 
$M=1$.  Let $\mathcal{G}_n$ be the grid of points 
\[
\mathcal{G}_n=\left\{\left(\frac{k}{2^{2n}},\frac{\ell}{2^{n}}\right):
   0\leq k\leq 2^{2n}, \ell\in\mathbf{Z}\right\}.
\]
The Borel-Cantelli lemma along with (\ref{prob-ests}) now implies that for 
large enough (random) $K_1$, if $n\ge K_1$ and $p_1,p_2$ are neighboring grid points 
in $\mathcal{G}_n$, then
\begin{equation}
\label{borel-cantelli}
|N(p_1)-N(p_2)|\leq 2^{-\frac{n}{4}}.
\end{equation}
Now suppose that $q_i=(t_i,x_i)$ with $|x_1-x_2|\leq1$, and that each point 
$q_i$ lies in some grid $\mathcal{G}_n$.  From the above, there is a path from 
$q_1$ to $q_2$ utilizing edges in grids $\mathcal{G}_{n'}$, with $n'\le n$, each edge in the path being 
a nearest neighbor edge in $\mathcal{G}_{n'}$, and with at most 8 edges 
from a given grid index $n'$.  Let $n_0$ be the least grid index used in this 
path.  We claim that for some constants $C>c>0$, such a path exists with $n_0$ satisfying
\begin{eqnarray*}
&&c2^{-2n_0}<|t_1-t_2|<C2^{-2n_0} 
, \\
&&c2^{-n_0}<|x_1-x_2|<C2^{-n_0}.
\end{eqnarray*}
Using the triangle inequality to 
sum
differences 
of $N(t,x)$ over edges of the path, we arrive at a geometric series, and 
conclude that
\begin{equation}\label{Kol1}
|N(q_1)-N(q_2)|\leq C_12^{-\frac{n_0}{4}}\hbox{ if }n_0\ge K_1.
\end{equation}
Although we have only proved the above for grid points,
such points are dense in $[0,T]\times\mathbf{R}$, and $N(t,x)$ has 
a continuous version because 
$X(t,x)$ 
 is continuous, and the drift contribution is
smooth.  Therefore it follows for all points in $[0,1]\times\R$. We have proved \eqref{Kol1} for $\Vert q_1-q_2\Vert\le C2^{-K_1}$ where $K_1$ is stochastically bounded uniformly in $\vep$. The required result follows. 
\end{proof}

\noindent{\bf Sketch of Proof of Theorem~\ref{Girseg}.} We carry out an excursion construction of an approximate solution $X^\vep$ to \eqref{signedeq} by starting the $i$th excursion at $(-1)^i\vep\psi$, 
and then run each independent excursion according to a fixed law of a $C^+_{rap}$-valued solution to 
\eqref{signedeq}
 with $X_0=\vep\psi$, if $i$ is even, and its negative if $i$ is odd, until the total mass hits $0$.  At this point a new excursion is started in the same manner. Theorem~\ref{RAL} is used to time change $X_t^\vep(1)$ into an approximate solution $Y^\vep(t)=X^\vep_{\tau^\vep_t}(1)$ of Girsanov's equation
\begin{equation}\label{girssde} dY_t=|Y_t|^{p'}dB_t,
\end{equation}
with $p<p'<1/2$ and $\frac{d\tau^{\vep}(t)}{dt}\le 1$. There will be an additional term $A^\vep(t)$ arising from all the excursion signed initial values up to time $t$ but it will converge to $0$ uniformly in $t$ due to the alternating nature of the sum.  We now proceed as in the excursion-based construction of non-zero solutions to Girsanov's sde \eqref{girssde} to show that one of the excursions of the approximate solutions will hit $\pm 1$ before time $T$ with probability close to $1$ as $T$ gets large, uniformly in $\vep$. Let $N^\vep$ be the number of excursions of $Y^\vep$ until one hits $\pm 1$ and let $N_\vep(T)$ be the number of excursions of $Y^\vep$ completed by time $T$.  $N^\vep$ is geometric with mean $\vep^{-1}$ by optional stopping.  Let 
$U_i(\vep)$ 
 be the time to completion of the $i$th excursion of $Y^\vep$.  Assuming $\sqrt T \vep^{-1}\in\N$, we have
\begin{align}
\nonumber P(\sup_{s\le T}|Y^\vep_s|\ge 1)&\ge P(N_\vep(T)\ge N_\vep)\\
\nonumber&\ge P(N_\vep(T)\ge \sqrt T \vep^{-1})-P(N_\vep>\sqrt T\vep^{-1})\\
\label{escbnd}&\ge P(U_{\sqrt T\vep^{-1}}(\vep)\le T)-(1-\vep)^{\sqrt T\vep^{-1}}.
\end{align}
 A key step now is to use diffusion theory to show that if $Y$ satisfies \eqref{girssde} (pathwise unique until it hits zero) then
\begin{equation}\label{domatt}P(Y_t>0\hbox{ for all }t\le T|Y_0=1))\sim 
cT^{-1/(2(1-p))}
 \hbox{ as }T\to\infty.\end{equation}
If $U_i(1)$ is the time of completion of the $i$th excursion of $Y$ where the excursions now start at $\pm 1$, then scaling shows that 
\[P(U_{\sqrt T\vep^{-1}}(\vep)\le T)=P(U_{\sqrt T\vep^{-1}}(1)\le \vep^{-(2-2p)}T).\]
\eqref{domatt} shows that $U_{\sqrt T\vep^{-1}}(1)/(\sqrt T \vep^{-1})^{2(1-p)}$ converges weakly as $\vep\downarrow 0$ to a stable subordinator of index $\alpha=(2(1-p))^{-1}$  and so for any $\eta>0$ we may choose $T$ large enough so that for small enough $\vep$ (by \eqref{escbnd}) we have 
\begin{align*}
P(\sup_{s\le T}|Y^\vep_s|\ge 1)&\ge P(U_{\sqrt T\vep^{-1}}(\vep)\le T)-(1-\vep)^{\sqrt T\vep^{-1}}\\
&\ge P
\left(
 \frac{U_{\sqrt T\vep^{-1}}(1)}{(\sqrt T \vep^{-1})^{2(1-p)}}\le T^p
\right)
 -e^{-\sqrt T}\ge 1-\eta.
\end{align*}
The fact that $(\tau^\vep)'(t)\le 1$ allows us to conclude that  with probability at least $1-\eta$, uniformly in $\vep$, the total mass of our approximate solution $X_t^\vep$ will hit $\pm1$ for some $t\le T$.  By taking a weak limit point of the $X^\vep$ we obtain the required non-zero solution to \eqref{signedeq}.\qed

\section{Pathwise Non-uniqueness and Uniqueness in Law for an SDE.}
\label{sec:sde}

The stochastic differential equation corresponding to \eqref{spde2} would be 
\begin{equation}\label{sde}
X_t=X_0+bt+\int_0^t(X_s)^p\,dB_s,\ \ X_t\ge 0\ \forall t\ge 0\ \ a.s.
\end{equation}
Here $b>0$, $0<p<1/2$, $B$ is a standard $(\cF_t)$-Brownian motion on $(\Omega,\cF,\cF_t,P)$ and $X_0$ is $\cF_0$-measurable.  A much simpler argument than that used to prove pathwise non-uniqueness 
in
 \eqref{spde2} allows one to establish pathwise non-uniqueness in \eqref{sde}. One only needs to apply the idea behind construction of $(\bar X_t^\vep,\bar Y_t^\vep)$ for $t\ge V^\vep_k$.  In any case  the result is undoubtedly known, given the well-known Girsanov examples (see, e.g., Section V.26 in \cite{RW}) and so we omit the proof.

\begin{theorem} \label{thm:sdenpu} There is a filtered probability space $(\Omega,\cF,\cF_t,P)$ carrying a standard $\cF_t$-Brownian motion and two solutions, $X^1$ and $X^2$, to \eqref{sde} with $X_0^1=X_0^2=X_0=0$ such that $P(X^1\neq X^2)>0$.
\end{theorem}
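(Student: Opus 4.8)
The plan is to follow the proof of Theorem~\ref{thm:nonpu}, which simplifies considerably in this one‑dimensional setting: there is no need for Theorem~\ref{RAL}, for the random time change, or for the cut‑off time $V^\vep_k$ and the associated ``padding'' of excursions. For each $\vep>0$ I would build a pair of non‑negative approximate solutions $(X^{1,\vep},X^{2,\vep})$ driven by a single Brownian motion $B$ via the same alternating excursion construction as in \eqref{UTdef1}--\eqref{XYdef}. Set $T^\vep_0=0$; assuming $X^{1,\vep}_{T^\vep_{2j}}=X^{2,\vep}_{T^\vep_{2j}}=0$, on the ``push'' interval $[T^\vep_{2j},T^\vep_{2j}+\vep]$ turn off the noise and put $X^{1,\vep}_t=2b(t-T^\vep_{2j})$ and $X^{2,\vep}_t\equiv0$; then on $[T^\vep_{2j}+\vep,T^\vep_{2j+1}]$ let $(X^{2,\vep},D^\vep)$, with $D^\vep\ge0$, solve
\begin{align*}
dX^{2,\vep}_t&=b\,dt+(X^{2,\vep}_t)^p\,dB_t,\qquad X^{2,\vep}_{T^\vep_{2j}+\vep}=0,\\
dD^\vep_t&=\bigl[(X^{2,\vep}_t+D^\vep_t)^p-(X^{2,\vep}_t)^p\bigr]\,dB_t,\qquad D^\vep_{T^\vep_{2j}+\vep}=2b\vep,
\end{align*}
(existence, with $D^\vep\ge0$, following from a Lipschitz approximation as in Theorem~2.5 of \cite{Shi94}, exactly as for \eqref{YDdef}), and set $X^{1,\vep}:=X^{2,\vep}+D^\vep$, which then solves $dX^{1,\vep}=b\,dt+(X^{1,\vep})^p\,dB$; stop at $T^\vep_{2j+1}=\inf\{t\ge T^\vep_{2j}+\vep:X^{1,\vep}_t=0\}$. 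On $[T^\vep_{2j+1},T^\vep_{2j+2}]$ do the same with the roles of $X^{1,\vep}$ and $X^{2,\vep}$ interchanged. Because the pushes alternate, each $X^{i,\vep}$ then solves \eqref{sde} up to a bounded‑variation drift correction of supremum norm $\le b\vep$ plus the ``missing'' noise, a martingale whose terminal variance on $[0,T]$ is at most $(2b\vep)^{2p}T$ since the relevant coordinate is $\le 2b\vep$ on push intervals; both corrections vanish as $\vep\downarrow0$.

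The core is a pair of excursion estimates drawn from one‑dimensional diffusion theory. Let $s(x)=\int_0^x\exp\bigl\{-\tfrac{2b}{1-2p}\,y^{1-2p}\bigr\}\,dy$ be the scale function of $dX=b\,dt+X^p\,dB$ determined by \eqref{seq} (so $s(x)/x\to1$ as $x\to0$, while $s(\infty)<\infty$). On each excursion $s(X^{1,\vep})$ is a non‑negative local martingale, hence a supermartingale, so the probability that $M^\vep:=X^{1,\vep}\vee X^{2,\vep}$ reaches $1$ during an excursion is at most $s(2b\vep)/s(1)$, just as in \eqref{Q1hit}--\eqref{Qimaxhit}. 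In the other direction $D^\vep$ is, on each excursion, a bounded non‑negative martingale that vanishes when the excursion ends; since $s(\infty)<\infty$ the diffusion $X^{1,\vep}$ is transient, so an excursion need not terminate, but exactly as in \eqref{DS}--\eqref{barXhitting} the Dubins--Schwarz theorem shows that a.s.\ either the excursion ends or $M^\vep$ reaches $1$. Optional stopping applied to $D^\vep$ then gives, for a fixed small $x_0\in(0,1)$, that the probability $D^\vep$ reaches $x_0$ before the excursion ends is at least $(2b\vep/x_0)-s(2b\vep)/s(1)$, precisely as in \eqref{Q1Dhit}--\eqref{term2}; choosing $x_0$ (depending only on $s$) and $\vep$ small enough this is $\ge b\vep/x_0\ge 2\,s(2b\vep)/s(1)$.

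Since successive excursions are i.i.d., comparing the two geometric‑type numbers of excursions needed for $|X^{1,\vep}-X^{2,\vep}|$ to reach $x_0$ versus for $M^\vep$ to reach $1$, exactly as in \eqref{N1bnd}--\eqref{sepprob1}, yields
\[
P\bigl(\textstyle\sup_t|X^{1,\vep}_t-X^{2,\vep}_t|\ge x_0 \text{ before }M^\vep\text{ hits }1\bigr)\ge\tfrac12(e^{-1}-e^{-2})
\]
for all small $\vep$. Standard moment bounds make $\{(X^{1,\vep_n},X^{2,\vep_n},B)\}_n$ tight in $C(\R_+,\R_+^2\times\R)$, and since the drift and noise corrections vanish, any limit point $(X^1,X^2,B)$ is a pair of solutions of \eqref{sde} driven by the common $B$. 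Dubins--Schwarz again gives $\limsup_tX^i_t=\infty$ a.s., so the first time $X^1\vee X^2$ reaches $2$ is a.s.\ finite and the analogous times for the approximants are stochastically bounded; hence the separation event survives the weak limit and $P(X^1\neq X^2)\ge\tfrac14(e^{-1}-e^{-2})>0$.

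I do not expect a genuine obstacle here, only two small technical points, both already resolved in the proof of Theorem~\ref{thm:nonpu}. First, $dX=b\,dt+X^p\,dB$ is transient to $+\infty$ (because $s(\infty)<\infty$), so the excursion estimates must be phrased through Dubins--Schwarz as in \eqref{DS}--\eqref{barXhitting} rather than by asserting that excursions terminate. Second, one needs the a priori non‑negativity of $D^\vep$ in the coupled SDE, which is obtained from the Lipschitz approximation already used for \eqref{YDdef}. This is why the details can safely be omitted.
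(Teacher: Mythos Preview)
Your proposal is correct and follows exactly the route the paper indicates: the paper omits the proof, remarking only that ``one only needs to apply the idea behind construction of $(\bar X_t^\vep,\bar Y_t^\vep)$ for $t\ge V^\vep_k$,'' which is precisely the one-dimensional excursion construction you describe (the equations \eqref{barT2j3} with $p$ in place of $p'$ and without the $\sqrt{K(k)}$ factor). Your observation that Theorem~\ref{RAL}, the time change, and the cut-off $V_k^\vep$ are all unnecessary here is the whole point of the simplification, and your handling of the two technical issues (possible non-termination of excursions via Dubins--Schwarz as in \eqref{DS}--\eqref{barXhitting}, and non-negativity of $D^\vep$ via Lipschitz approximation) matches the paper's treatment in the SPDE case.
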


Weak existence of solutions to \eqref{sde} for a given initial law may be 
constructed through approximation by Lipschitz coefficients.  
This
is in fact how Shiga \cite{Shi94} constructed solutions to (1.1) and hence
is the method used in Theorem 1.
As we were not able to verify 
whether or not uniqueness in law holds in \eqref{spde2} it is perhaps 
interesting that it does hold in \eqref{sde}.  That is, the law of $X$ is 
uniquely determined by the law of $X_0$.  We have not been able to find this 
result in the literature and since the solutions to \eqref{sde} turn out to 
have a particular sticky boundary condition at $0$ which was not immediately 
obvious to us, we include the elementary proof here. 

\begin{theorem}\label{thm:sdeul} Any solution to \eqref{sde} is the diffusion on $[0,\infty)$ with scale function
\begin{equation}\label{scale} s(x)=\int_0^x\exp\Bigl\{\frac{-2b|y|^{1-2p}}{1-2p}\Bigr\}\,dy
\end{equation}
(with inverse function $s^{-1}$ on $[0,s(\infty)$),
speed measure
\begin{equation}\label{speed}
m(dx)=\frac{dx}{s'(s^{-1}(x))^2s^{-1}(x)^{2p}}+b^{-1}\delta_0(dx)\hbox{ on }[0,s(\infty)),
\end{equation}
and 
starting
with the law of $X_0$.  
In particular if $T_0=\inf\{t:X_t=0\}$, then 
\begin{equation}\label{sticky0} T_0<\infty\hbox{ implies }\int_{T_0}^{T_0+\vep}1(X_s=0)\,ds>0\ \ \forall\vep>0\ a.s.,
\end{equation}
and solutions to \eqref{sde} are unique in law. 
\end{theorem}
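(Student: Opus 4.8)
The plan is to show that any solution $X$ of \eqref{sde} is, after the scale change $s$, a one-dimensional diffusion whose behaviour at the regular boundary point $0$ is completely pinned down by a single It\^o computation, and then to invoke the classical scale-and-speed description of such diffusions. Throughout write $Z=\{s\ge 0:X_s=0\}$ and $\mathcal{L}f=\tfrac12 x^{2p}f''+bf'$. For $x>0$ the coefficient $x\mapsto x^{p}$ is smooth, so pathwise uniqueness holds up to the hitting time of $0$ and there $X$ coincides with the $\mathcal L$-diffusion; solving $\mathcal L s=0$, $s(0)=0$, $s'>0$ gives precisely the scale function \eqref{scale}, and since $p<1/2$ both $s$ and the associated speed density ($\propto x^{-2p}$) are integrable at $0$ while $s(0+)>-\infty$, so $0$ is a regular boundary point whose behaviour must be identified.

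First I would establish the occupation identity. The function $s$ is $C^{1}$ on $[0,\infty)$ and $C^{2}$ on $(0,\infty)$ with $s''(a)=-2b\,a^{-2p}s'(a)$, which lies in $L^{1}_{\mathrm{loc}}$ near $0$ exactly because $p<1/2$; hence the generalized It\^o--Tanaka formula applies to $s(X_{t})$. Using the occupation-time formula with $\langle X\rangle_{t}=\int_{0}^{t}X_{s}^{2p}\,ds$, the term $\tfrac12\int L_{t}^{a}s''(a)\,da$ equals $-b\int_{0}^{t}1(X_{s}>0)s'(X_{s})\,ds$, and combining with the drift contribution $b\int_{0}^{t}s'(X_{s})\,ds$ together with $s'(0)=1$ yields
\begin{equation}\label{occid}
s(X_{t})=s(X_{0})+b\int_{0}^{t}1(X_{s}=0)\,ds+\int_{0}^{t}s'(X_{s})X_{s}^{p}\,dB_{s}.
\end{equation}
Equivalently, Tanaka's formula applied to $X^{+}=X$ gives $L^{0}_{t}(X)=2b\int_{0}^{t}1(X_{s}=0)\,ds$. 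Thus $s(X)$ is a nonnegative continuous local martingale plus an increasing process carried by $Z$, and this is the entire content of the boundary condition at $0$.

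Next I would deduce the sticky property \eqref{sticky0} from \eqref{occid} by soft arguments. The set $Z$ has empty interior, since on any subinterval of $Z$ the equation \eqref{sde} forces $b\cdot(\text{length})=0$; in particular $X$ is never absorbed at $0$. Now fix $T_{0}=\inf\{t:X_{t}=0\}$ and, on $\{T_{0}<\infty\}$, let $R=\inf\{t>T_{0}:\mathrm{Leb}(Z\cap(T_{0},t))>0\}$. By definition of $R$ the increasing term in \eqref{occid} is constant on $[T_{0},R]$, so $t\mapsto s(X_{(T_{0}+t)\wedge R})$ equals the increment of the local martingale in \eqref{occid} over $[T_{0},(T_{0}+t)\wedge R]$ and is therefore a nonnegative continuous local martingale started from $0$; being a nonnegative supermartingale it vanishes identically, whence $X\equiv 0$ on $[T_{0},R]$, which is impossible unless $R=T_{0}$ since $Z$ has empty interior. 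Hence $R=T_{0}$ a.s.\ on $\{T_{0}<\infty\}$, which is exactly \eqref{sticky0}.

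Finally, for uniqueness in law, set $Y=s(X)$; by \eqref{occid}, $Y$ is a $[0,s(\infty))$-valued process with $Y_{t}=Y_{0}+b\int_{0}^{t}1(Y_{s}=0)\,ds+M_{t}$, $Y\ge 0$, where $M$ is a continuous local martingale with $\langle M\rangle_{t}=\int_{0}^{t}g(Y_{s})\,ds$ and $g(y)=s'(s^{-1}(y))^{2}\,s^{-1}(y)^{2p}$, $g(0)=0$. Thus $Y$ is the diffusion in natural scale whose speed measure is the absolutely continuous part of \eqref{speed} on $(0,s(\infty))$, together with the atom $b^{-1}\delta_{0}$ read off from $b\int_{0}^{t}1(Y_{s}=0)\,ds=\tfrac12 L^{0}_{t}(Y)$; equivalently, deleting the time $Y$ spends at $0$ produces the classical, scale-and-speed determined instantaneously reflecting diffusion with speed density $\propto 1/g$, and that time is recovered deterministically from its local time through the atom $b^{-1}$. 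In either formulation $X$ solves the well-posed martingale problem for the sticky diffusion with scale \eqref{scale} and speed \eqref{speed}, so the law of $X$ is determined by that of $X_{0}$. I expect the main obstacle to be the rigorous justification of \eqref{occid}: applying the generalized It\^o--Tanaka formula to the merely $C^{1}$ function $s$ across the zero set of $X$ and controlling the local-time term, which is precisely where the hypothesis $p<1/2$ (integrability of $s''$ at $0$) is used; once \eqref{occid} is available, both the stickiness and the identification of the speed measure are routine.
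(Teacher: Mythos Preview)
Your approach is essentially the same as the paper's: both derive the key identity \eqref{occid} (the paper's equation for $Y_t=s(X_t)$) via Meyer's generalized It\^o formula, using the integrability of $s''$ near $0$ for $p<1/2$ in exactly the way you describe. The differences are in organization, not substance. For the stickiness \eqref{sticky0} you give a direct supermartingale argument (if the zero set had zero measure on an initial interval past $T_0$, then $s(X)$ would be a nonnegative local martingale started at $0$, hence identically $0$, contradiction), whereas the paper instead first identifies $X$ with the scale/speed diffusion and then reads off \eqref{sticky0} from the atom $b^{-1}\delta_0$. Your argument here is clean and self-contained; it is in fact the same supermartingale trick the paper uses later in the proof of Theorem~\ref{sticky}. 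For the identification with the diffusion, the paper is more explicit than you are: it time-changes $Y=s(X)$ by its quadratic variation to obtain a process $R$ which it shows, via uniqueness of the Skorokhod problem, is a reflecting Brownian motion with $b\int_0^{\alpha(t)}1(X_u=0)\,du=L^R_t(0)$, and then writes the inverse time change explicitly as $\int L^R(\alpha^{-1}(t),x)\,m(dx)$. Your appeal to ``the well-posed martingale problem for the sticky diffusion'' and the local-time identity $b\int_0^t 1(Y_s=0)\,ds=\tfrac12 L^0_t(Y)$ is correct in spirit but would need to be unpacked along these same lines, so the paper's explicit computation is what you would end up doing to justify that step.
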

\begin{proof} The last statement is immediate from the first assertion.  

To prove $X$ is the diffusion described above, by conditioning on $X_0$ we may assume $X_0=x_0$ is constant.  We will show directly that $X$ is the appropriate scale and time change of a reflecting Brownian motion.  Note that 
$s$
 is strictly increasing on $[0,\infty)$ (in fact on the entire real line) so that $s^{-1}$ is well-defined.  Note also that
\[s'(x)=\exp\Bigl\{\frac{-2b|x|^{1-2p}}{1-2p}\Bigr\}\hbox{ is of bounded variation and continuous},\]
and 
\begin{equation}\label{s"form}
s''(x)=\begin{cases}-s'(x)2bx^{-2p} &\text{ if }x>0
,
\\
s'(x)2b|x|^{-2p} &\text{ if }x<0.
\end{cases}
\end{equation}
If $L^X_t(x)$ is the semimartingale local time of $X$, Meyer's generalized It\^o formula (see Section IV.45 of \cite{RW}) shows that
\begin{equation}\label{GIto}
Y_t\equiv s(X_t)=Y_0+\int_0^t s'(X_u)X_u^p\,dB_u+b\int_0^ts'(X_u)\,du+\frac{1}{2}\int L^X_t(x)ds'(x).\end{equation}
Since $s'$ is continuous at $0$, 
\begin{align}
\label{LTform}\frac{1}{2}\int L_t^X(x)ds'(x)&=\frac{1}{2}\int 1(x>0)L^X_t(x)ds'(x)\\
\nonumber&=\frac{1}{2}\int1(x>0)L^X_t(x)s''(x)dx\\
\nonumber&=-b\int_0^ts'(X_u)X_u^{-2p}X_u^{2p}1(X_u>0)\,du\ \ (\hbox{by }\eqref{s"form})\\
\nonumber&=-b\int_0^ts'(X_u)1(X_u>0)\,du.
\end{align}
So \eqref{GIto} and $s'(0)=1$ imply
\begin{equation}\label{Yeq}
Y_t=Y_0+\int_0^t s'(X_u)X_u^p\,dB_u+b\int_0^t1(X_u=0)\,du.
\end{equation}
Define $U=\int_0^\infty s'(X_u)^2X_u^{2p}\,du$ and
a random time change $\alpha:[0,U)\to[0,\infty)$ by
\begin{equation}\label{alpha}
\int_0^{\alpha (t)} s'(X_u)^2X_u^{2p}\,du=t.
\end{equation}
Clearly $\alpha$ is strictly increasing and is also continuous since $X$ cannot be $0$ on any interval.  
If $R(t)=Y(\alpha(t))$ for $t<U$ we now show that $R$ is a reflecting Brownian motion on $[0,\infty)$, starting at $Y_0$, where we extend the definition for $t\ge U$ by appending 
a
 conditionally independent
reflecting Brownian motion starting at the appropriate point. In what follows we may assume $t<U$ as the values of 
$R(t)$
 for $t\ge U$ will not be relevant. We have from \eqref{Yeq}
\[R(t)=\beta_t+b\int_0^{\alpha(t)}1(X_u=0)\,du\equiv\beta_t+A_t,\]
where $\langle\beta\rangle_t=t$ and so $\beta$ is a Brownian motion starting at $Y_0$.  $A$ is continuous non-decreasing and supported by $\{t:X(\alpha(t))=0\}=\{t:R(t)=0\}$.  By uniqueness of the Skorokhod problem (see Section V.6 in \cite{RW}) $R$ is a reflecting Brownian motion and $A_t=L_t^R(0)$, that is,
\begin{equation}\label{LTR} b\int_0^{\alpha(t)} 1(X_u=0)\,du=L^R_t(0).
\end{equation}
Let $\alpha^{-1}:[0,\infty)\to [0,U)$ denote the inverse function to $\alpha$. Now differentiate \eqref{alpha} to see that 
\begin{equation}\label{alpha'}
\hbox{if }X_u>0, \hbox{ then }(\alpha^{-1})'(u)=s'(X_u)^2X_u^{2p}.
\end{equation}
We may use \eqref{LTR} and \eqref{alpha'} to conclude that 
\begin{align*}
t&=\int_0^t 1(X_u>0)\,du+\int_0^t 1(X_u=0)\,du\\
&=\int_0^t\frac{1(X_u>0)}{s'(X_u)^2X_u^{2p}}\,d(\alpha^{-1}(u))+b^{-1}L^R_{\alpha^{-1}(t)}(0)\\
&=\int_0^{\alpha^{-1}(t)}\frac{1(R
(v)
>0)}{s'(s^{-1}(
R(v)
))^2s^{-1}(R(v))^{2p}}\,dv+b^{-1}L^R_{\alpha^{-1}(t)}(0),
\end{align*}
where we have set $u=\alpha(v)$ in the last.  Therefore if $m$ is as in \eqref{speed}, then
\[t=\int_{[0,\infty)}L^R(\alpha^{-1}(t),x)dm(x),\]
and
\[X(t)=s^{-1}(R(\alpha^{-1}(t)).\]
This identifies $X$ as the diffusion on $[0,\infty)$ with the given scale function and speed measure.
\end{proof}

\noindent{\bf Remarks.} (1) One can also argue in the opposite direction.
That is, given a diffusion $X$ with speed measure and scale function as above and a given initial law on $[0,\infty)$, one 
can build a Brownian motion $B$, perhaps on an enlarged probability space,  so that $X$ satisfies \eqref{sde}, giving us an alternative weak existence proof.
\medskip

\noindent (2) One can construct solutions as weak limits of difference equations or equivalently as standard parts of an infinitesimal difference equation.  Here one cuts off the martingale part when the solution overshoots into the negative half-line and lets the positive drift with slope $b$ bring it back to $\R_+$.  The smaller the $b$ the longer it takes to become positive, the more time the solution
will spend at zero and so the larger the atom of the speed measure at $0$. A short calculation shows
that at $p=1/2$ the overshoot reduces to $\Delta t$ (the time step in the difference equation) and so
there is 
no
time spent at $0$ in the limit. (See Section V.48 of \cite{RW} for the standard analysis.)
\medskip

\noindent (3) It would appear that \eqref{sde} is not a particular effective 
tool to study diffusions with drift $b$ on the positive half-line.  By just 
extending the equation to $[0,\infty)$ we inadvertently pick out a particular 
case of Feller's possible boundary behaviors at $0$ among all diffusions 
satisfying \eqref{sde} on $(0,\infty)$.  (This is certainly not a novel 
observation---see the comments in Section V.48 in \cite{RW}.) Presumably things 
can only get worse for the stochastic pde \eqref{spde2}.  In the next section 
we scratch the surface of this issue and show that all solutions to this 
stochastic pde spend positive time in the (infinite-dimensional) zero state. 
    
\section{Proof of Theorem~\ref{sticky}}\label{sec:sticky}

Let $X$ be a solution of \eqref{spde2} and define
\[V_k=\inf\{s
:\exists x',x\hbox{ such that }|
X(s,x)
-X(s,x')|>k|x-x'|^{1/4}\}.\]
As in Lemma~\ref{lem:unifHolder} (but as there is no $\vep$ it is a bit easier), $\lim_kV_k=\infty$ a.s.  As in Section~\ref{sec:proofnonpu}, we set $p'=\frac{p+2}{5}$ and $K(k)=K_{\ref{RAL}}(1/4,k)$. If we define
\[R(u)=\begin{cases}\frac {\int X(u,x)^{2p}\,dx}{K(k)\langle X(u),1\rangle^{2p'}}&\text{ if }\langle X(u),1\rangle>0\text{ and }u\le V_k,\\
\qquad 1&\text{ otherwise,}
\end{cases}\]
then by Theorem~\ref{RAL},
\[R(u)\ge 1\hbox{ for all }u\ge 0.\]
Introduce a random time change $\tau$ given by
\begin{equation}\label{tau} 
\int_0^{\tau(t)} R(u)1(\langle X_u,1\rangle>0)+1(\langle X_u,1\rangle=0)\,du=t.
\end{equation}
Clearly $\tau$ is strictly increasing, continuous and well-defined for all $t\ge 0$.  
Differentiate \eqref{tau} to see that 
\[\tau'(t) R_{\tau(t)}1(\langle X_{\tau(t)},1\rangle>0)+\tau'(t)1(\langle X_{\tau(t)},1\rangle=0)=1\quad\hbox{ for a.a. }t\ge0\]
(a.a. is with respect to Lebesgue measure), and therefore
\begin{equation}\label{tau'}\tau'(t)=R^{-1}_{\tau(t)}1(\langle X_{\tau(t)},1\rangle>0)+1(\langle X_{\tau(t)},1\rangle=0)\le 1\quad\hbox{ for a.a. }t\ge 0.
\end{equation}
Now let
\[Y(t)=\langle X(\tau(t)),1\rangle=b\tau(t)+M(t),\]
where $M$ is a continuous local martingale satisfying
\begin{align*}
\langle M\rangle_t&=\int_0^{\tau(t)}\int X(u,x)^{2p}\,dx\,du\\
&=\int_0^t\int X(\tau(r),x)^{2p}\,dx\Bigl[R(\tau(r))^{-1}1(Y(r)>0)+1(Y(r)=0)\Bigr]dr\\
&=\int_0^tK(k)Y_r^{2p'}\,dr\qquad\hbox{for }\tau(t)\le V_k.
\end{align*}
We have used \eqref{tau'} in the second line.  If $T_k=\tau^{-1}(V_k)$ (a stopping time w.r.t the time-changed filtration),  we may therefore assume there is a Brownian motion $B$ so that
\[Y(t\wedge T_k)=b\tau(t\wedge T_k)+\int_0^{t\wedge T_k}\sqrt{K(k)}Y^{p'}_r\,dB_r.\]
If $b'=K(k)^{1/(2(p'-1))}b$, then $\hat Y(t)=K(k)^{1/(2(p'-1))} Y(t)$ 
satisfies
\[\hat Y(t\wedge T_k)=b'\tau(t\wedge T_k)+\int_0^{t\wedge T_k}\hat Y_r^{p'}\,dB_r.\]
If $s(x)=\int_0^x\exp\Bigl\{\frac{-2b'|y|^{1-2p'}}{1-2p'}\Bigl\}\,dy$, then an application of Meyer's generalized It\^o's formula shows that if $Z(t)=s(\hat Y(t))$, then for $t\le T_k$,
\[Z(t)=\int_0^t s'(\hat Y_r)\hat Y_r^{p'}\,dB_r+b'\int_0^ts'(\hat Y_r)\tau'(r)\,dr+\frac{1}{2}\int L^{\hat Y}_t(x)ds'(x).\]
Here, as before, $L^{\hat Y}$ is the semimartingale local time of $\hat Y$. Now argue as in \eqref{LTform} to see that for $t\le T_k$,
\begin{align*}
\frac{1}{2}\int L_t^{\hat Y}(x)ds'(x)=-b'\int_0^t s'(\hat Y_r) 1(\hat Y_r>0)\,dr.
\end{align*} 
Therefore if $N(t)=\int_0^t s'(\hat Y_r)\hat Y_r^{p'}\,dB_r$ and 
\[A(t)=b'\int_0^t s'(\hat Y_r)(1-\tau'(r))1(\hat Y_r>0)\,dr,\]
then for $t\le T_k$,
\begin{align}
\nonumber Z(t)&=N(t)-A(t)+b'\int_0^ts'(0)\tau'(r)1(\hat Y(r)=0)\,dr\\
\label{Zform}&=N(t)-A(t)+b'\int_0^t1(Y(r)=0)\,dr,
\end{align}
where we used $s'(0)=1$ and \eqref{tau'} in the last line. $A$ is a non-decreasing continuous process by \eqref{tau'}, $N$ is a continuous local martingale, and $N(0)=A(0)=0$.  Fix $k$ and assume $V_k>0$, and so $T_k>0$ because $T_k\ge V_k$.  If 
\[T_+=\inf\Bigl\{t:\int_0^{t\wedge T_k}1(Y(r)=0)\,dr>0\Bigr\},\]
then by \eqref{Zform}, $Z(t\wedge T_+)$ is a continuous non-negative local supermartingale starting at $0$ and so
is identically zero.  This means $Z(r)=0$ for $r\le T_+$ and so the same holds for $Y(r)$, which by the definition of $T_+$ and assumption that $T_k>0$ implies that $T_+=0$ a.s.  Since $V_k\uparrow\infty$ a.s. we have shown that w.p. 1 
\[\int_0^t1(\langle X(\tau(r)),1\rangle =0)\,dr=\int_0^t1(Y(r)=0)\,dr>0\ \forall t>0.\]
Setting $\tau(r)=u$ and using \eqref{tau'} again (to show $\tau'(r)=1$ on $\{\langle X(\tau(r)),1\rangle =0\}$ for a.a. $r$), we see that
the above implies
\[\int_0^t1(\langle X_u,1\rangle =0)\,du>0\ \ \forall t>0\ \ a.s.\]
The proof is complete.  \qed

\medskip
\noindent{\bf Acknowledgement.} It is a pleasure to thank Martin Barlow for a series of helpful conversations
on this work.  
We are grateful to the referee for very helpful suggestions.

\end{document}